\documentclass[12pt,english]{article}
\pdfoutput=1
\usepackage[T1]{fontenc}
\usepackage[latin9]{inputenc}
\usepackage{geometry}
\usepackage{array}
\usepackage{calc}
\usepackage{mathtools}
\usepackage{amsmath}
\usepackage{amsthm}
\usepackage{amssymb}

\makeatletter

\providecommand{\tabularnewline}{\\}

\numberwithin{equation}{section}
\theoremstyle{plain}
\newtheorem{thm}{\protect\theoremname}[section]
\theoremstyle{plain}
\newtheorem{conjecture}[thm]{\protect\conjecturename}
\theoremstyle{plain}
\newtheorem{lem}[thm]{\protect\lemmaname}
\theoremstyle{plain}
\newtheorem{prop}[thm]{\protect\propositionname}
\theoremstyle{definition}
\newtheorem{example}[thm]{\protect\examplename}
\theoremstyle{definition}
\newtheorem{problem}[thm]{\protect\problemname}

\usepackage{bm}
\usepackage{tikz}
\usetikzlibrary{arrows}
\usepackage{hyperref}
\usepackage{needspace}
\usepackage{colortbl}
\usepackage{xparse}
\usepackage{caption}
\usepackage{microtype}
\usepackage{titlesec}
\titleformat{\section}
  {\normalfont\large\bfseries}{\thesection.}{.8ex}{} 
\titleformat{\subsection}
  {\normalfont\bfseries}{\thesubsection.}{.8ex}{} 
\usetikzlibrary{shapes}
\tikzstyle{pathdefault}=[draw, line width=1, solid, color=black]
\tikzstyle{nodedefault}=[circle, inner sep=1.1, fill=black]
\tikzstyle{nodered}=[circle, inner sep=1.1, fill=red]
\tikzstyle{nodeblue}=[circle, inner sep=1.1, fill=blue]
\tikzstyle{empty}=[]
\tikzstyle{nodeellipsis}=[circle, inner sep=0.5, fill=black]
\tikzstyle{pathcolor1}=[draw, line width=1, solid, color=red]
\tikzstyle{pathcolor2}=[draw, line width=1, solid, color=blue]
\tikzstyle{pathcolorlight}=[draw, line width=1, dotted, color=blue]
\tikzstyle{arbpathcolor0}=[line width=1, dashdotted, color=black]
\tikzstyle{arbpathcolor1}=[line width=1, densely dashed, color=red]
\tikzstyle{arbpathdefault}=[line width=1, densely dotted, color=blue]
\newcounter{id}
\newcommand{\drawlinedotswithstyle}[4]{
 \def\x{{#3}}
 \def\y{{#4}}
 \tikzstyle{thispathstyle}=[#1]
 \tikzstyle{thisnodestyle}=[#2]
 \setcounter{id}{-1} 
 \foreach \j in {#3}{\stepcounter{id}} 
 \foreach \i in {1,...,\the\value{id}}{  
  \path[thispathstyle] (\x[\i],\y[\i]) --(\x[\i-1],\y[\i-1]); 
 }
 \foreach \i in {1,...,\the\value{id}}{  
  \node[thisnodestyle] at (\x[\i],\y[\i]) {}; 
 }
 \node[thisnodestyle] at (\x[0],\y[0]) {}; 
}

\DeclareDocumentCommand{\drawlinedots}{ O{pathdefault} O{nodedefault} m m}{\drawlinedotswithstyle{#1}{#2}{#3}{#4}}
\DeclareDocumentCommand{\drawlinedotsred}{ O{pathcolor1} O{nodered} m m}{\drawlinedotswithstyle{#1}{#2}{#3}{#4}}
\DeclareDocumentCommand{\drawlinedotsblue}{ O{pathcolor2} O{nodeblue} m m}{\drawlinedotswithstyle{#1}{#2}{#3}{#4}}
\usetikzlibrary{arrows.meta}
\tikzset{
  mapsto/.style={
    very thick,
    -{Latex[length=3mm,width=2mm]}
  }
}
\let\originalleft\left
\let\originalright\right
\renewcommand{\left}{\mathopen{}\mathclose\bgroup\originalleft}
\renewcommand{\right}{\aftergroup\egroup\originalright}
\newcommand{\leqnomode}{\tagsleft@true\let\veqno\@@leqno}
\newcommand{\reqnomode}{\tagsleft@false\let\veqno\@@eqno}
\reqnomode
\makeatother

\usepackage{babel}
\providecommand{\conjecturename}{Conjecture}
\providecommand{\examplename}{Example}
\providecommand{\lemmaname}{Lemma}
\providecommand{\problemname}{Problem}
\providecommand{\propositionname}{Proposition}
\providecommand{\theoremname}{Theorem}

\begin{document}
\global\long\def\des{\operatorname{des}}%

\global\long\def\ides{\operatorname{ides}}%

\global\long\def\pix{\operatorname{pix}}%

\global\long\def\fix{\operatorname{fix}}%

\global\long\def\st{\operatorname{st}}%

\global\long\def\asc{\operatorname{asc}}%

\global\long\def\std{\operatorname{std}}%

\global\long\def\Comp{\operatorname{Comp}}%

\global\long\def\ad{\operatorname{ad}}%

\title{Bijections between pattern-avoiding \\
derangements and desarrangements}
\author{Alyssa G.\ Henke, Derek H.\ Stephens, and Yan Zhuang\\
Department of Mathematics and Computer Science\\
Davidson College\texttt{}~\\
\texttt{\{alhenke, destephens, yazhuang\}@davidson.edu}}
\maketitle
\begin{abstract}
Derangements are permutations without fixed points, and are in bijection
with desarrangements: permutations whose first non-descent is even,
or equivalently, permutations without ``pixed points''. Bsila, Cox,
Hugo, Styron, and Zhuang recently proved a theorem characterizing
all $\Pi\subseteq\mathfrak{S}_{3}$, such that $1\leq\left|\Pi\right|\leq3$,
for which the number of derangements avoiding all patterns in $\Pi$
is equal to the number of desarrangements avoiding all patterns in
$\Pi$. They left finding a bijective proof of this theorem as an
open problem, and posed a related conjecture concerning the distributions
of fixed points and pixed points over pattern avoidance classes. In
this paper, we give bijective proofs of this theorem and conjecture.{\let\thefootnote\relax\footnotetext{2020 \textit{Mathematics Subject Classification}. Primary 05A19; Secondary 05A05, 05A15.}}
\end{abstract}
\textbf{\small{}Keywords: }{\small{}derangements, desarrangements,
pattern avoidance, fixed points, pixed points, descents }{\small\par}

\section{\label{s-intro}Introduction}

\subsection{Definitions and background}

Let $\mathfrak{S}_{n}$ denote the symmetric group of permutations
of $[n]\coloneqq\{1,2,\dots,n\}$. By convention, $\mathfrak{S}_{0}$
consists of the empty permutation. The \textit{length} of a permutation
$\pi$ refers to its number of letters (so that $\pi\in\mathfrak{S}_{n}$
has length $n$). We sometimes use the term ``permutation'' for sequences
of distinct positive integers, as opposed to specifically permutations
in $\mathfrak{S}_{n}$, although some of the following definitions
only apply to the latter.

We say that $i\in[n]$ is a \textit{fixed point} of $\pi=\pi_{1}\pi_{2}\cdots\pi_{n}\in\mathfrak{S}_{n}$
if $\pi_{i}=i$, and we let $\fix(\pi)$ denote the number of fixed
points of $\pi$. Let ${\cal D}_{n}$ be the set of \textit{derangements}\textemdash permutations
with no fixed points\textemdash in $\mathfrak{S}_{n}$. For example,
we have $31254\in\mathcal{D}_{5}$, but $13542\notin\mathcal{D}_{5}$
as it has two fixed points, namely $1$ and $4$. 

Let $d_{n}\coloneqq\left|\mathcal{D}_{n}\right|$, the $n$th \textit{derangement
number}. The study of these numbers is classical, dating back to the
work of de Montmort \cite{Montmort1980} in the early 18th century.
The first several derangement numbers are displayed in Table \ref{tb-dernum};
see also entry A000166 of the On-Line Encyclopedia of Integer Sequences
(OEIS) \cite{oeis}.\renewcommand{\arraystretch}{1.2}
\begin{table}
\begin{centering}
\begin{tabular}{c|c|c|c|c|c|c|c|c|c|c|c|c}
$n$ & $0$ & $1$ & $2$ & $3$ & $4$ & $5$ & $6$ & $7$ & $8$ & $9$ & $10$ & $11$\tabularnewline
\hline 
$d_{n}$ & $1$ & $0$ & $1$ & $2$ & $9$ & $44$ & $265$ & $1854$ & $14833$ & $133496$ & $1334961$ & $14684570$\tabularnewline
\end{tabular}
\par\end{centering}
\caption{\label{tb-dernum}The first several derangement numbers.}
\bigskip{}
\end{table}

Another combinatorial model for the derangement numbers is given by
desarrangements. For a permutation $\pi=\pi_{1}\pi_{2}\cdots\pi_{n}$,
we say that $i\in[n-1]$ is an \textit{ascent} of $\pi$ if $\pi_{i}<\pi_{i+1}$,
and is a \textit{descent} of $\pi$ if $\pi_{i}>\pi_{i+1}$. Observe
that an index $i\in[n]$ is not a descent of $\pi$ precisely when
either $i$ is an ascent of $\pi$ or $i=n$. Let $\asc(\pi)$ be
the number of ascents and $\des(\pi)$ the number of descents of $\pi$.

A \textit{desarrangement} is a permutation whose first non-descent
is even. (By convention, the empty permutation is also considered
to be a desarrangement.) Let $\mathfrak{D}_{n}$ denote the set of
desarrangements in $\mathfrak{S}_{n}$. For instance, $85216437\in\mathfrak{D}_{8}$
because its first ascent, $4$, is even. Note that the decreasing
permutation $n\cdots21$ is a desarrangement if and only if $n$ is
even.

Desarrangements were introduced by D\'{e}sarm\'{e}nien \cite{Desarmenien1984},
who proved that they are in bijection with derangements and used them
to give a combinatorial proof of the recurrence $d_{n}=nd_{n-1}+(-1)^{n}$
for the derangement numbers. For our purposes, it is instructive to
review D\'{e}sarm\'{e}nien's bijection $\Phi\colon\mathcal{D}_{n}\rightarrow\mathfrak{D}_{n}$,
which we also illustrate with an example:
\begin{center}
\vspace{-10bp}
\begin{tabular}{l>{\raggedright}p{4.25in}>{\raggedright}p{0.03in}l}
1. & Take a derangement $\pi$. &  & $\pi=831967524$\tabularnewline
2. & Write $\pi$ in cycle notation, such that each cycle contains its
smallest entry in the second position, and where the cycles are ordered
in decreasing order of their smallest entries. &  & $\pi=(756)(94)(3182)$\tabularnewline
3. & Remove the parentheses; the result is a desarrangement. &  & $\Phi(\pi)=756943182$\tabularnewline
\end{tabular}
\par\end{center}

A prominent theme in permutation enumeration is the study of pattern
avoidance. For a permutation $\tau$ of length $n$\textemdash not
necessarily in $\mathfrak{S}_{n}$\textemdash its \textit{standardization}
$\std(\tau)$ is defined to be the permutation in $\mathfrak{S}_{n}$
obtained by replacing the smallest letter of $\tau$ by 1, the second
smallest by 2, and so on. For example, we have $\std(635)=312$. Given
permutations $\sigma$ and $\pi$, an \textit{occurrence} of $\sigma$
in $\pi$ is a subsequence of $\pi$ whose standardization is $\sigma$,
and $\pi$ is said to \textit{avoid} $\sigma$ (as a \textit{pattern})\textemdash and
that $\pi$ is \textit{$\sigma$-avoiding}\textemdash if $\pi$ contains
no occurrences of $\sigma$. For instance, $635$ is an occurrence
of $\sigma=312$ in $\pi=163254$, but $\pi=163254$ avoids $\sigma=231$. 

Let $\mathfrak{S}_{n}(\sigma)$ denote the set of permutations in
$\mathfrak{S}_{n}$ which avoid the pattern $\sigma$. So, as we observed,
$163254\in\mathfrak{S}_{6}(231$). It is well known that, for any
length 3 pattern $\sigma\in\mathfrak{S}_{3}$, the number of permutations
in $\mathfrak{S}_{n}(\sigma)$ is equal to the $n$th Catalan number.

If we have a set $\Pi$ of patterns, let $\mathfrak{S}_{n}(\Pi)$
denote the set of permutations in $\mathfrak{S}_{n}$ which avoid
every pattern in $\Pi$; such permutations are called $\Pi$-\textit{avoiding}.\footnote{For a specific $\Pi$, we will often omit the curly braces enclosing
the elements of $\Pi$ when writing out $\mathfrak{S}_{n}(\Pi)$ and
in similar notations.} In their seminal work on permutation pattern avoidance, Simion and
Schmidt \cite{Simion1985} achieved the enumeration of the avoidance
classes $\mathfrak{S}_{n}(\Pi)$ for all length 3 pattern sets $\Pi\subseteq\mathfrak{S}_{3}$. 

Because this paper concerns derangements and desarrangements that
avoid prescribed patterns, let us introduce the notation $\mathcal{D}_{n}(\Pi)$
for the set of derangements in $\mathfrak{S}_{n}(\Pi)$, and $\mathfrak{D}_{n}(\Pi)$
for the set of desarrangements in $\mathfrak{S}_{n}(\Pi)$.

A number of works in the permutation patterns literature study distributions
of permutation statistics over avoidance classes. Most relevant for
us are the works of Robertson, Saracino, and Zeilberger \cite{Robertson2002}
and of Mansour and Robertson \cite{Mansour2002}, who determined the
distribution of the fixed points statistic $\fix$ over all $\mathfrak{S}_{n}(\Pi)$
where $\Pi\subseteq\mathfrak{S}_{3}$ and $1\leq\left|\Pi\right|\leq3$.
These results specialize to enumerations for sets $\mathcal{D}_{n}(\Pi)$
of pattern-avoiding derangements.

More recently, Bsila, Cox, Hugo, Styron, and Zhuang \cite{Bsila2025}
studied the desarrangement avoidance classes $\mathfrak{D}_{n}(\Pi)$
for all $\Pi\subseteq\mathfrak{S}_{3}$. By comparing their results
with those of Robertson\textendash Saracino\textendash Zeilberger
and Mansour\textendash Robertson, Bsila et al.~obtained the following
theorem.
\begin{thm}[{\cite[Theorem 3.28]{Bsila2025}}]
\label{t-derdes}Let $\Pi\subseteq\mathfrak{S}_{3}$ with $1\leq\left|\Pi\right|\leq3$.
Then $\left|\mathcal{D}_{n}(\Pi)\right|=\left|\mathfrak{D}_{n}(\Pi)\right|$
for all $n\geq0$ if and only if $\Pi$ is one of the following: $\{132\}$,
$\{132,312\}$, $\{132,321\}$, $\{213,231\}$, $\{123,132,312\}$,
$\{123,213,231\}$, $\{123,312,321\}$, $\{132,312,321\}$, $\{213,231,312\}$,
or $\{213,231,321\}$.
\end{thm}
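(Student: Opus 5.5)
The proof splits naturally into an ``if'' direction and an ``only if'' direction, and I would handle the two quite differently.

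For the ``only if'' direction, I would simply compute. For every $\Pi\subseteq\mathfrak{S}_{3}$ with $1\leq|\Pi|\leq3$ not on the list, I would exhibit a small $n$, typically $n\leq 6$, with $|\mathcal{D}_{n}(\Pi)|\neq|\mathfrak{D}_{n}(\Pi)|$. The derangement counts can be read off from the distributions of $\fix$ over $\mathfrak{S}_{n}(\Pi)$ due to Robertson--Saracino--Zeilberger and Mansour--Robertson. The desarrangement counts can be obtained by direct enumeration of the (few) $\Pi$-avoiding permutations of small length, checking whether the first non-descent is even. There are $6+15+20=41$ sets to check. Symmetry helps only partially: reverse-complement preserves fixed points but not the desarrangement property, so each case essentially must be done by hand or computer. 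This part is routine.

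For the ``if'' direction, I would prove equality for each of the ten listed sets. For the doubleton and tripleton sets, the avoidance classes are small (of size $2^{n-1}$, $\binom{n}{2}+1$, $n$, or Fibonacci-type), with explicit structural descriptions due to Simion--Schmidt. For each such $\Pi$, I would describe $\mathcal{D}_{n}(\Pi)$ and $\mathfrak{D}_{n}(\Pi)$ directly and count them. Take $\{132,312\}$ as an example: these permutations are built by successively placing each next letter as the new maximum or the new minimum at the right end, read in reverse. Both the fixed-point condition and the first non-descent can then be tracked by a simple recursion, and it suffices to check that the two resulting recurrences and initial values coincide. The tripleton classes have at most $n$ or Fibonacci-many elements and can be handled by explicit listing. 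Where possible, I would try to pair the cases $\{123,312,321\}$ and $\{132,312,321\}$, for instance, by noting that their counts satisfy the same recursion.

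The main obstacle is the single-pattern case $\Pi=\{132\}$. There $|\mathfrak{S}_{n}(132)|$ is Catalan, and both the derangement count and the desarrangement count are nontrivial. For the derangement side I would invoke the known generating function for $\fix$ over $\mathfrak{S}_{n}(132)$ from Robertson--Saracino--Zeilberger and specialize it to $\fix=0$. For the desarrangement side I would use the standard decomposition $\pi=\alpha\, n\,\beta$ of a $132$-avoider, where every entry of $\alpha$ exceeds every entry of $\beta$. The first non-descent lies inside $\alpha$, unless $\alpha$ is decreasing, in which case it lies at the position of $n$ or beyond. Splitting accordingly, I would write a functional equation involving the Catalan generating function $C(x)$ and a refinement tracking the parity of the first ascent when the initial decreasing run is forced. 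I would then solve it and compare with the derangement generating function. Getting this refinement right, particularly how the decreasing prefix interacts with $\beta$ when $\alpha$ is empty, is the step I expect to require the most care. I would first try to reduce it to counting $132$-avoiders by the length of their initial decreasing run, which has a clean ballot-number formula.
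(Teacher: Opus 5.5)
Your plan is sound, but it is an enumerative proof. That is essentially the original route of Bsila et al., who compared counts on the two sides. The paper instead proves the ``if'' direction bijectively and does not redo the ``only if'' direction. As you say, that direction is a finite comparison, but it can genuinely need $n=6$: for $\Pi=\{312\}$ the two counts agree for $n\le5$ and are $31$ versus $32$ at $n=6$.

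In the paper, the nine sets other than $\{132\}$ are handled by restricting to $\fix=0$ the $\des$-preserving $\fix\to\pix$ bijections of Theorem~\ref{t-main}. These are built mostly from $\Psi$, via reverse-complementation and restriction. The set $\{132\}$ is handled by $\hat{\kappa}^{-1}\circ\chi\circ\kappa$, which sends fixed points to centered tunnels and centered tunnels to hills.

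Your approach buys explicit values: Jacobsthal numbers for $\{213,231\}$ and $\{132,312\}$, $n-1$ for $\{132,321\}$, and Fine numbers for $\{132\}$. However, it uses the Robertson--Saracino--Zeilberger generating function as a black box and gives no correspondence. The paper's approach answers the open problem of Bsila et al.\ and at the same time yields Theorem~\ref{t-main}, hence Conjecture~\ref{cj-main}.

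A few details need fixing.

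\textbf{The $\{132\}$ step.} When $\alpha$ is nonempty and decreasing, the first non-descent is at position $|\alpha|$, just before $n$, not at $n$. More importantly, no refinement or ballot numbers are needed. For $\alpha\neq\emptyset$, $\pi=\alpha n\beta$ is a desarrangement if and only if $\std(\alpha)$ is. And $n\beta$ is a desarrangement if and only if $\beta$ is not. Hence $D(x)=1/(1+x-xC(x))=2/(1+2x+\sqrt{1-4x})$, the Fine generating function, which matches the $\fix=0$ count for $132$. This is exactly the decomposition underlying the paper's $\hat{\kappa}$.

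\textbf{The $\{132,312\}$ step.} Whether a position is fixed is not determined by the prefix built so far, so a one-parameter recursion will not work. You need either an extra shift parameter, or, after applying $rc$, the criterion $a_i+2d_i=n+1$ of Lemma~\ref{l-213-231-fix}. The latter leads naturally to the paper's composition count.

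\textbf{The tripleton sets.} Pairing $\{123,312,321\}$ with $\{132,312,321\}$ via a common recursion makes no sense, since the former class is empty for $n\ge5$.
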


Bsila et al.~posed the open problem of proving Theorem \ref{t-derdes}
bijectively. One may naively hope that D\'{e}sarm\'{e}nien's bijection
$\Phi\colon\mathcal{D}_{n}\rightarrow\mathfrak{D}_{n}$ restricts
to bijections from $\mathcal{D}_{n}(\Pi)$ to $\mathfrak{D}_{n}(\Pi)$
for some of these $\Pi$, but taking a permutation in cycle notation
and removing the parentheses does not preserve pattern avoidance restrictions.
Instead, novel bijections are needed.

Just as derangements are permutations without fixed points, desarrangements
can be defined as permutations without ``pixed points''. As shown
by Foata and Han \cite{Foata2008a}, every permutation $\pi$ can
be written uniquely as a concatenation $\pi=\iota\delta$ where $\iota$
is an increasing permutation and $\delta$ is a desarrangement. For
example, $\pi=13675842$ decomposes as $\iota=136$ and $\delta=75842$,
and $\delta$ is indeed a desarrangement because its first ascent
$2$ is even. Foata and Han call this the \textit{pixed factorization}
of $\pi$ and the letters of $\iota$ \textit{pixed points}. Let $\pix(\pi)$
denote the number of pixed points of a permutation $\pi$. For example,
$\pix(13675842)=3$.

The bijection $\Phi\colon\mathcal{D}_{n}\rightarrow\mathfrak{D}_{n}$
can be extended to a bijection from $\mathfrak{S}_{n}$ to $\mathfrak{S}_{n}$\textemdash which
we also denote $\Phi$ by a slight abuse of notation\textemdash satisfying
$\fix(\pi)=\pix(\Phi(\pi))$ for all $\pi\in\mathfrak{S}_{n}$. We
simply remove the fixed points from $\pi$ in cycle notation, apply
$\Phi$ to the resulting derangement, and then prepend the fixed points
in ascending order (where they become pixed points). Thus, the statistics
$\fix$ and $\pix$ are equidistributed over $\mathfrak{S}_{n}$.

It is natural to wonder whether $\fix$ and $\pix$ are equidistributed
over any avoidance classes $\mathfrak{S}_{n}(\Pi)$. Bsila et al.~offered
the conjecture below which addresses this question. In short, for
all pattern sets listed in Theorem \ref{t-derdes} except for $\Pi=\{132\}$,
the equinumerosity $\left|\mathcal{D}_{n}(\Pi)\right|=\left|\mathfrak{D}_{n}(\Pi)\right|$
can be extended to an equidistribution between $\fix$ and $\pix$
over $\mathfrak{S}_{n}(\Pi)$.
\begin{conjecture}[{\cite[Conjecture 3.29]{Bsila2025}}]
\label{cj-main}Let $\Pi\subseteq\mathfrak{S}_{3}$ with $1\leq\left|\Pi\right|\leq3$.
Then $\fix$ and $\pix$ are equidistributed over $\mathfrak{S}_{n}(\Pi)$
for all $n\geq0$ if and only if $\Pi$ is one of the following: $\{132,312\}$,
$\{132,321\}$, $\{213,231\}$, $\{123,132,312\}$, $\{123,213,231\}$,
$\{123,312,321\}$, $\{132,312,321\}$, $\{213,231,312\}$, or $\{213,231,321\}$.
\end{conjecture}

\subsection{Overview of results}

In this paper, we provide bijective proofs for Theorem \ref{t-derdes}
and Conjecture \ref{cj-main}. All of our bijections for Conjecture
\ref{cj-main} preserve the descent number, so in fact we show that
$\fix$ and $\des$ are jointly equidistributed with $\pix$ and $\des$
over the listed avoidance classes. Our main result is the following.
\begin{thm}
\label{t-main}Let $\Pi$ be $\{132,312\}$, $\{132,321\}$, $\{213,231\}$,
$\{123,132,312\}$, $\{123,213,231\}$, $\{123,312,321\}$, $\{132,312,321\}$,
$\{213,231,312\}$, or $\{213,231,321\}$. Then, for all $n\geq0$,
the statistics $(\fix,\des)$ and $(\pix,\des)$ are equidistributed
over $\mathfrak{S}_{n}(\Pi)$.
\end{thm}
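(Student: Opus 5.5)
Since the theorem asks for nine equidistribution statements, I would first use symmetries to cut down the number of cases, and then settle each remaining class by explicitly describing the permutations it contains.

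\textbf{Reduction by symmetry.} The map $\pi\mapsto\pi^{-1}$ preserves $\fix$ but not $\des$ or $\pix$, so it is of limited use. Reverse--complement $\pi\mapsto\pi^{rc}$, with $\pi^{rc}_i=n+1-\pi_{n+1-i}$, behaves better: it preserves $\fix$ and $\des$, and it maps $\{132,312\}$ to $\{213,231\}$, $\{123,132,312\}$ to $\{123,213,231\}$, and $\{132,312,321\}$ to $\{213,231,321\}$. It does not preserve $\pix$, however, so I would use it only on the fixed-point side. For each pair of classes related by $rc$, the distribution of $(\fix,\des)$ is then the same, and it suffices to compute the distribution of $(\pix,\des)$ on each class separately.

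\textbf{Structural description of each class.} For every $\Pi$ I would give an explicit description of $\mathfrak{S}_n(\Pi)$, as Simion and Schmidt did. Some examples:
\begin{itemize}
\item $\mathfrak{S}_n(132,312)$ consists of the permutations built by repeatedly placing the next entry as either the current minimum or the current maximum of the remaining values, read from the right. These are encoded by binary words.
\item $\mathfrak{S}_n(213,231)$ is the analogous class read from the left.
\item $\mathfrak{S}_n(132,321)$ and the three-pattern classes have very rigid shapes, such as layered permutations, reverse layered permutations, or an increasing sequence with a single inserted block.
\end{itemize}
Given such a description, I would read $\fix$, $\pix$, and $\des$ directly off the encoding. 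For $\pix$, the pixed factorization $\pi=\iota\delta$ means that $\pix(\pi)$ is computed from the initial ascending run and the parity of the first non-descent. In the binary-word encodings this becomes a condition on the length of an initial segment of the word. For $\fix$, a fixed point at position $i$ corresponds to a specific local condition on the word.

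\textbf{Constructing the bijections.} In each case I would then build a bijection $\psi$ on the encodings (words, compositions, or layer sequences) that sends the $\fix$-relevant feature to the $\pix$-relevant feature while preserving the descent set size. The natural move is to rearrange the word so that the factor determining the fixed points becomes an initial ascending run of the right parity. For example, in $\{132,312\}$ each choice of "max" versus "min" creates or prevents a descent. Fixed points occur exactly when a run of consecutive choices lands on the diagonal, so I would reverse or cyclically shift the relevant prefix. For the three-pattern classes the sets are small, typically of size $n$ or $\binom{n}{2}+1$, so I would simply tabulate the bivariate generating functions and match them term by term; any explicit matching then serves as the bijection.

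\textbf{Main obstacle.} I expect the main difficulty to be $\{132,321\}$ and the pair $\{132,312\}$/$\{213,231\}$. In these classes fixed points are governed by global position information, not by anything local, so finding a descent-preserving correspondence that converts a fixed-point count into an initial-run count with the parity condition is nontrivial. My first attempt would be to compute the joint generating function $\sum q^{\fix}t^{\des}$ from the structural decomposition and compare it with $\sum q^{\pix}t^{\des}$. Where both satisfy the same recursion in $n$, I would turn that recursion into a recursive bijection.
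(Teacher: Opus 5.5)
\textbf{Verdict: there is a genuine gap.} Your proposal does not prove the central case. For $\{213,231\}$, and hence via $rc$ for $\{132,312\}$, every substantive step is deferred: ``rearrange the word'', ``reverse or cyclically shift the relevant prefix'', ``compute the joint generating function and compare''. None of these is specified or checked.

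You also never determine where the fixed points of a $\{213,231\}$-avoider lie; you call the condition local in one paragraph and global in another. The missing idea is the explicit structure of the class. A permutation $\pi\in\mathfrak{S}_n(213,231)$ is determined by $w=\ad(\pi)$: the letter at the $j$th ascent is $j$, and the letter at the $j$th descent is $n-j+1$. Hence the fixed points of $\pi$ are exactly:
\begin{itemize}
\item the positions of the initial run $A^k$ of $w$;
\item at most one descent position $i$ with $a_i+2d_i=n+1$, where $a_i,d_i$ count the $A$s and $D$s in $w_1\cdots w_i$;
\item position $n$, but only for the identity.
\end{itemize}

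Even with this characterization, the matching with $\pix$ needs a genuinely nontrivial $\des$-preserving bijection. For $n=4$, the words $DAD$ and $DDA$ give $4132$ and $4312$, with $(\fix,\pix)=(1,0)$ and $(0,1)$, so the identity on words already fails.

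The paper's construction works as follows. Strip off the run $A^k$ (it is restored at the end). A remaining derangement word $Dw'$ becomes the composition $L$ obtained from $w'$ by $A\mapsto1$, $D\mapsto2$. The derangement condition says exactly that $L$ has a partial sum equal to its number of parts. Cutting there as $L=JK$, the number of $2$s in $J$ equals the number of parts of $K$. So $K$ can serve as a key: the $j$th $2$ of $J$ is barred when the $j$th part of $K$ is a $2$. The resulting composition into parts $1,2,\bar{2}$ is read back as a desarrangement word via $1\mapsto A$, $2\mapsto AD$, $\bar{2}\mapsto DD$, with a $D$ prepended. A variant handles a single descent fixed point: delete the corresponding $2$ and prepend $DD$ instead. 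Some such construction, or an actual generating-function computation, is what your proposal lacks.

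\textbf{Smaller points.}
\begin{itemize}
\item Your $rc$ reduction undersells itself. The statistic $\pix$ depends only on the $AD$-word, and $\ad$ maps both $\mathfrak{S}_n(213,231)$ and $\mathfrak{S}_n(132,312)$ bijectively onto all words of length $n-1$. So $rc$-paired classes automatically share their $(\pix,\des)$ distributions too. The cases $\{132,312\}$, $\{123,132,312\}$ and $\{132,312,321\}$ then follow from $\{213,231\}$, $\{123,213,231\}$ and $\{213,231,321\}$.
\item A $\des$-preserving bijection on $\mathfrak{S}_n(213,231)$ also preserves $\asc$. It therefore restricts to the subclasses with $\asc\le1$ and with $\des\le1$, which are exactly $\mathfrak{S}_n(123,213,231)$ and $\mathfrak{S}_n(213,231,321)$. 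On $\mathfrak{S}_n(213,231,312)$, whose words are $A^kD^{n-k-1}$, one checks $\fix=\pix$ directly.
\item Tabulating the tripleton classes, as you propose, is a legitimate alternative. Each has $n$ elements, except $\{123,312,321\}$, which is empty once $n\ge5$. But the tabulation still has to be carried out.
\item $\{132,321\}$ is not an obstacle. Its non-identity members are $x(x+1)\cdots y\,12\cdots(x-1)\,(y+1)\cdots n$ with $2\le x\le y\le n$. Each has one descent, $\fix=n-y$ and $\pix=y-x$. So for each fixed $x$, both statistics take every value in $\{0,\dots,n-x\}$ exactly once.
\end{itemize}
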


One can check that $\fix$ and $\pix$ have different distributions
over $\mathfrak{S}_{n}(\Pi)$ for all other $\Pi\subseteq\mathfrak{S}_{3}$
with $1\leq\left|\Pi\right|\leq3$. Hence, Conjecture \ref{cj-main}
follows.\footnote{We note in advance that most of our bijections used to prove Theorem
\ref{t-main} have definitions that are only valid for $n\geq2$,
but the equidistributions are trivial for $n=0$ and $n=1$.}

All of our bijections giving the equidistributions in Theorem \ref{t-main}
restrict to bijections between the corresponding $\mathcal{D}_{n}(\Pi)$
and $\mathfrak{D}_{n}(\Pi)$, so our bijective resolution to Conjecture
\ref{cj-main} automatically yields a bijective proof of Theorem \ref{t-derdes}
for every pattern set except $\Pi=\{132\}$. We will separately construct
a bijection from $\mathcal{D}_{n}(132)$ to $\mathfrak{D}_{n}(132)$.

While it is more common to encode the descent class of a permutation
using the descent set or the descent composition, it will be more
convenient for our purposes to use a word. Let $\mathcal{W}_{n}$
be the set of words of length $n$ over the alphabet $\{A,D\}$, and
define the \textit{$AD$-word} of a permutation $\pi=\pi_{1}\pi_{2}\cdots\pi_{n}$
to be the word 
\[
\ad(\pi)\coloneqq w_{1}w_{2}\cdots w_{n-1}\in\mathcal{W}_{n-1}\quad\text{where}\quad w_{i}=\begin{cases}
A, & \text{if }\pi_{i}<\pi_{i+1},\\
D, & \text{if }\pi_{i}>\pi_{i+1},
\end{cases}
\]
for each $i\in[n-1]$. For example, $\ad(153462)=ADAAD$. In almost
all of the $\mathfrak{S}_{n}(\Pi)$ covered in Theorem \ref{t-main},
permutations are uniquely determined by their descent class, so most
of our bijections will essentially act on $AD$-words.

Our paper is organized as follows:
\begin{itemize}
\item In Section \ref{s-213-231}, we construct a bijection $\Psi\colon\mathcal{W}_{n}\rightarrow\mathcal{W}_{n}$
which sends $AD$-words of $\{213,231\}$-avoiding permutations with
a given number of fixed points to $AD$-words of $\{213,231\}$-avoiding
permutations with that number of pixed points. This directly leads
to a bijection $\Phi_{213,231}\colon\mathfrak{S}_{n}(213,231)\rightarrow\mathfrak{S}_{n}(213,231)$
which transforms the number of fixed points into the number of pixed
points while preserving the number of descents, hence proving Theorem
\ref{t-main} for $\Pi=\{213,231\}$.
\item In Section \ref{s-others}, we provide additional bijections to complete
the proof of Theorem \ref{t-main}. Almost all of these bijections
are obtained with the help of our earlier bijections $\Psi$ and $\Phi_{213,231}$,
e.g., by composing them with other maps or restricting them appropriately.
For example, we will exploit reverse-complementation symmetry to obtain
a bijection for the case $\Pi=\{132,312\}$.
\item In Section \ref{s-132}, we will complete our bijective proof of Theorem
\ref{t-derdes} by giving a bijection from $\mathcal{D}_{n}(132)$
to $\mathfrak{D}_{n}(132)$. Our bijection is obtained by composing
three bijections, two of which are already known: one is due to Krattenthaler
\cite{Krattenthaler2001} and the other to Elizalde and Deutsch \cite{Elizalde2003}.
All of these bijections involve Dyck paths.
\item We end in Section \ref{s-open} with a couple of open problems relating
to this work.
\end{itemize}
We note that an independent proof of Conjecture \ref{cj-main} was
very recently provided by Dong and Xu \cite{Dong}. Given $\pi\in\mathfrak{S}_{n}$,
let $\ides(\pi)$ denote the number of descents of the inverse permutation
$\pi^{-1}$. Dong and Xu proved that $(\fix,\des)$ and $(\pix,\ides)$
are equidistributed over $\mathfrak{S}_{n}(\Pi)$ for all $\Pi$ listed
in Conjecture \ref{cj-main}. Because $\des(\pi)=\ides(\pi)$ for
all $\pi\in\mathfrak{S}_{n}(\sigma)$ where $\sigma\in\{132,213,231,312\}$
\cite[Corollary 3.2]{Stump2008/09}, their result is equivalent to
our Theorem \ref{t-main}. On the other hand, while $(\fix,\des)$
and $(\pix,\ides)$ are equidistributed over the full symmetric group
$\mathfrak{S}_{n}$ \cite[Theorem 1.2]{Foata2008a}, the same is not
true for $(\fix,\des)$ and $(\pix,\des)$.

While there is slight overlap in the ideas used in some of our proofs,
our approaches are fundamentally different yet complementary: Rather
than constructing explicit bijections, Dong and Xu prove the equidistributions
by showing that the corresponding generating functions are equal.
This generating function approach is advantageous in that it yields
formulas for computing the distributions, whereas our bijective approach
adds structural insight.

\section{\label{s-213-231}Proof of Theorem \ref{t-main} for \texorpdfstring{$\{213,231\}$}{\{213,231\}}-avoiding
permutations}

\subsection{Preliminaries on \texorpdfstring{$\{213,231\}$}{\{213,231\}}-avoiding
permutations}

Our proof of Theorem \ref{t-main} for almost all of the listed pattern
sets $\Pi$ will rely on a bijection $\Psi\colon\mathcal{W}_{n}\rightarrow\mathcal{W}_{n}$
sending $AD$-words of $\{213,231\}$-avoiding permutations with a
given number of fixed points to $AD$-words of $\{213,231\}$-avoiding
permutations with that number of pixed points. This bijection will
also preserve the number of $D$s. We begin by establishing some basic
facts about $\{213,231\}$-avoiding permutations.
\begin{lem}
\label{l-213-231-ad}Let $\pi=\pi_{1}\pi_{2}\cdots\pi_{n}\in\mathfrak{S}_{n}(213,231)$.
\begin{enumerate}
\item [\normalfont{(a)}] If $i\in[n-1]$ is the $j$th ascent of $\pi$,
then $\pi_{i}=j$.
\item [\normalfont{(b)}] If $i\in[n-1]$ is the $j$th descent of $\pi$,
then $\pi_{i}=n-j+1$.
\item [\normalfont{(c)}] If $\asc(\pi)=k$, then $\pi_{n}=k+1$.
\end{enumerate}
\end{lem}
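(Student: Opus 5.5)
The plan is to first establish the standard structural description of $\{213,231\}$-avoiding permutations and then read off (a)--(c) directly. The key observation is that for $\pi\in\mathfrak{S}_{n}(213,231)$, every entry $\pi_{i}$ with $i<n$ is either the minimum or the maximum of the remaining suffix $\{\pi_{i},\pi_{i+1},\dots,\pi_{n}\}$. Indeed, if $\pi_{i}$ were neither, there would be indices $j,k>i$ with $\pi_{j}<\pi_{i}<\pi_{k}$. Then $\pi_{i}\pi_{j}\pi_{k}$ (if $j<k$) is an occurrence of $213$, and $\pi_{i}\pi_{k}\pi_{j}$ (if $k<j$) is an occurrence of $231$; either way we have a contradiction.

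Next I will determine which of the two cases occurs at each position $i<n$. If $\pi_{i}$ is the minimum of the suffix starting at $i$, then $\pi_{i}<\pi_{i+1}$, so $i$ is an ascent. If $\pi_{i}$ is the maximum, then $\pi_{i}>\pi_{i+1}$, so $i$ is a descent. Thus reading $\pi$ left to right, each ascent position takes the current smallest unused value and each descent position takes the current largest unused value. The set of unused values at any stage is therefore an interval $\{a,a+1,\dots,b\}$, starting from $[n]$. Each ascent increments $a$ and each descent decrements $b$.

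From this the three parts follow by a straightforward induction along the positions. If $i$ is the $j$th ascent, then exactly $j-1$ ascents precede it, so the current minimum is $j$ and $\pi_{i}=j$, which gives (a). If $i$ is the $j$th descent, then $j-1$ descents precede it, so the current maximum is $n-(j-1)$ and $\pi_{i}=n-j+1$, which gives (b). For (c), after positions $1,\dots,n-1$ have been processed with $k$ ascents, the remaining single value is the interval's lower endpoint $k+1$. It equals the upper endpoint $n-(n-1-k)=k+1$, as it must for consistency. Hence $\pi_{n}=k+1$.

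I expect no real obstacle. The only step requiring care is the first, namely verifying the min/max dichotomy via the two pattern cases. The rest is bookkeeping with the shrinking interval.
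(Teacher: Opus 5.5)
Your proof is correct, but it takes a different route from the paper's.

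The paper proves (a) by induction on $j$. It first shows that the first ascent carries the letter $1$. Then, supposing the $(k+1)$th ascent $i$ had $\pi_i>k+1$, it rules out every possible location of the letter $k+1$:
\begin{itemize}
\item before the $k$th ascent, it would produce a $213$;
\item strictly between the $k$th and $(k+1)$th ascents, it would produce an extra ascent;
\item after position $i$, it would produce a $231$.
\end{itemize}
Part (b) is left as ``very similar,'' and (c) is deduced from (a) and (b).

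You instead isolate one structural fact: each non-final entry is the minimum or the maximum of the suffix it begins. From this, the ascent/descent dichotomy and the shrinking-interval bookkeeping give (a), (b), and (c) at once. Ascents and descents are treated symmetrically, with no case analysis and no separate argument for descents. Your argument also yields directly the ``shuffle'' description the paper states after the lemma.

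A further advantage is that your property is actually equivalent to avoiding $\{213,231\}$. In any occurrence of $213$ or $231$, the first letter is neither the minimum nor the maximum of its suffix. Hence, for every word in $\mathcal{W}_{n-1}$, the rule ``take the current minimum at an $A$ and the current maximum at a $D$'' produces a $\{213,231\}$-avoiding permutation with that $AD$-word. This supplies the surjectivity half of the paper's subsequent claim that $\operatorname{ad}$ restricts to a bijection $\mathfrak{S}_n(213,231)\to\mathcal{W}_{n-1}$; the lemma as stated gives only injectivity.

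The paper's induction, by contrast, is phrased directly in terms of the $j$th ascent, matching the form of the statement. It pays for this with a three-case analysis and an omitted symmetric argument for (b).
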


In other words, every permutation in $\pi\in\mathfrak{S}_{n}(213,231)$
is the shuffle of an increasing sequence $12\cdots k$ (corresponding
to the ascents) with a decreasing sequence $n(n-1)\cdots(k+2)$ (corresponding
to the descents), along with the letter $k+1$ at the end.
\begin{proof}
We only prove (a). The proof of (b) is very similar to that of (a),
and (c) follows immediately from (a) and (b).

We proceed via induction on $j$. First consider the case $j=1$,
and let $i$ be the first ascent of $\pi$. Notice that the letter
$1$ cannot appear before position $i$, or else there would an earlier
ascent. Furthermore, we cannot have $\pi_{i+1}=1$ because $i$ is
an ascent, and $\pi_{i}\pi_{i+1}1$ would be an occurrence of $231$
if $1$ appears after $\pi_{i+1}$. Hence, we must have $\pi_{i}=1$.

Now, assume that the result holds for all $j$ up to $k$, and let
$i$ be the $(k+1)$th ascent of $\pi$. By the induction hypothesis,
$\pi_{i}$ cannot be less than $k+1$, as every letter from $1$ to
$k$ must appear earlier. Now, assume for contradiction that $\pi_{i}>k+1$.
Let $l$ be the $k$th ascent of $\pi$, so that $\pi_{l}=k$. Then
we have three cases for the placement of $k+1$ in $\pi$:
\begin{itemize}
\item If $k+1$ appears before position $l$ in $\pi$, then because $l$
is an ascent, we would have an occurrence $(k+1)k\pi_{l+1}$ of $213$
in $\pi$.
\item If $k+1$ appears strictly between positions $l$ and $i$ in $\pi$,
then we would have an additional ascent between the $k$th and $(k+1)$th
ascents of $\pi$.
\item If $k+1$ appears after position $i$ in $\pi$, then we would have
an occurrence $\pi_{i}\pi_{i+1}(k+1)$ of $231$ in $\pi$. 
\end{itemize}
We conclude that $\pi_{i}=k+1$, completing the induction.
\end{proof}
Recall that $\ad\colon\mathfrak{S}_{n}\rightarrow\mathcal{W}_{n-1}$
sends a permutation to its $AD$-word. As a consequence of Lemma \ref{l-213-231-ad},
$\ad$ restricts to a bijection between $\mathfrak{S}_{n}(213,231)$
and $\mathcal{W}_{n-1}$. We will use $\theta_{213,231}$ to denote
the inverse of this bijection, so that $\theta_{213,231}(w)$ is the
unique permutation in $\mathfrak{S}_{n}(213,231)$ with $AD$-word
$w$. For example, if $w=DDADAAAD$, then $\theta_{213,231}(w)=981723465$.

Given a word $w$ over $\{A,D\}$, let $\fix(w)\coloneqq\fix(\theta_{213,231}(w))$
and $\pix(w)\coloneqq\pix(\pi)$ where $\pi$ is any permutation with
$AD$-word $w$.\footnote{This definition makes sense because $\pix(\pi)$ depends only on the
underlying $AD$-word of $\pi$.} Then we would like our bijection $\Psi\colon\mathcal{W}_{n}\rightarrow\mathcal{W}_{n}$
to satisfy
\[
\fix(w)=\pix(\Phi(w))\quad\text{and}\quad\des(w)=\des(\Phi(w))
\]
for all $w\in\mathcal{W}_{n}$.

Before proceeding, we prove the following lemma concerning fixed points
of $\{213,231\}$-avoiding permutations. Here, $a_{i}(\pi)$ is the
number of $A$s and $d_{i}(\pi)$ the number of $D$s in the $i$th
prefix $w_{1}w_{2}\cdots w_{i}$ of the $AD$-word $\ad(\pi)=w_{1}w_{2}\cdots w_{n-1}$.
For instance, if $\pi=18765243$, then $\ad(\pi)=ADDDDAD$, so $a_{4}(\pi)=1$
and $d_{4}(\pi)=3$.
\begin{lem}
\label{l-213-231-fix}Let $n\geq2$ and $\pi\in\mathfrak{S}_{n}(213,231)$.
\begin{enumerate}
\item [\normalfont{(a)}] Suppose that $i\in[n-1]$ is an ascent of $\pi$.
Then $i$ is a fixed point of $\pi$ if and only if $d_{i}(\pi)=0$.
\item [\normalfont{(b)}] Suppose that $i\in[n-1]$ is a descent of $\pi$.
Then $i$ is a fixed point of $\pi$ if and only if $a_{i}(\pi)+2d_{i}(\pi)=n+1$.
\item [\normalfont{(c)}] The index $n$ is a fixed point of $\pi$ if and
only if $\pi=12\cdots n$.
\item [\normalfont{(d)}] The permutation $\pi$ has at most one fixed point
which is a descent.
\item [\normalfont{(e)}] The permutation $\pi$ is a derangement if and
only if 1 is a descent of $\pi$ and there does not exist $2\leq i\leq n-1$
such that $i$ is a descent of $\pi$ and $a_{i}(\pi)+2d_{i}(\pi)=n+1$.
\end{enumerate}
\end{lem}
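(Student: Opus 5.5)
Everything follows from the explicit description in Lemma \ref{l-213-231-ad}: if $i$ is an ascent then $\pi_i=a_i(\pi)$, since $i$ is the $a_i(\pi)$th ascent; if $i$ is a descent then $\pi_i=n-d_i(\pi)+1$, since $i$ is the $d_i(\pi)$th descent; and $\pi_n=\asc(\pi)+1$. The plan is to translate the condition $\pi_i=i$ into a condition on prefix counts, using $a_i(\pi)+d_i(\pi)=i$ for $i\in[n-1]$.

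For (a), if $i$ is an ascent then $\pi_i=i$ holds iff $a_i(\pi)=i=a_i(\pi)+d_i(\pi)$, that is, iff $d_i(\pi)=0$. For (b), if $i$ is a descent then $\pi_i=i$ holds iff $n-d_i(\pi)+1=a_i(\pi)+d_i(\pi)$, that is, iff $a_i(\pi)+2d_i(\pi)=n+1$.

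For (c), $\pi_n=n$ means $\asc(\pi)+1=n$, so all $n-1$ positions are ascents and $\pi$ is the identity; the converse is trivial. For (d), along descents the quantity $a_i+2d_i$ is strictly increasing in $i$. Indeed, moving from $i$ to $i'>i$ adds a nonnegative number of $A$s and at least one $D$ (the descent at $i'$), so $a_i+2d_i$ grows by at least $2$. Hence the equation in (b) holds for at most one descent. More simply, $\pi_i=n-d_i+1$ is strictly decreasing over descents while $i$ increases, so they can coincide at most once.

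For (e), $\pi$ is a derangement iff no ascent, no descent, and not $n$ is fixed. By (a), an ascent $i$ is fixed iff it lies in the initial run of $A$s. So no ascent is fixed iff the word does not begin with $A$, i.e.\ iff $1$ is a descent (here $n\geq2$ is used). In that case $\pi$ is not the identity, so by (c) $n$ is not fixed. By (b), no descent is fixed iff no descent $i$ satisfies $a_i+2d_i=n+1$. For $i=1$ a descent we have $a_1+2d_1=2=n+1$ only when $n=1$, so the range can be restricted to $2\leq i\leq n-1$.

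There is no real obstacle here; the only care needed is the bookkeeping of $a_i+d_i=i$ and the $i=1$ boundary case in (e).
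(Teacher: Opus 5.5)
Your proposal is correct and follows essentially the same route as the paper: you read off $\pi_i$ from Lemma~\ref{l-213-231-ad}, rewrite $\pi_i=i$ using $a_i(\pi)+d_i(\pi)=i$, and get (d) from the monotonicity of $a_i+2d_i$ along descents. The differences are cosmetic: you derive (c) from Lemma~\ref{l-213-231-ad}(c) rather than directly from $213$-avoidance, and you write out the details of (e), including the $i=1$ boundary case, which the paper calls an immediate consequence of (a)--(c).
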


\begin{proof}
We first prove (a). Suppose that $i$ is the $j$th ascent of $\pi$,
and that it is a fixed point of $\pi$. By Lemma \ref{l-213-231-ad}
(a), we have that $\pi_{i}=j$, so $i=j$. This implies that every
position before $i$ is also an ascent of $\pi$, so $d_{i}(\pi)=0$.
The converse follows similarly from Lemma \ref{l-213-231-ad} (a).

For (b), suppose that $i$ is the $j$th descent of $\pi$ (so that
$d_{i}(\pi)=j$), and that it is a fixed point of $\pi$. Then $\pi_{i}=n-j+1$
by Lemma \ref{l-213-231-ad} (b), so $n-j+1=i$. Substituting in $i=a_{i}(\pi)+d_{i}(\pi)$
and $j=d_{i}(\pi)$ yields $a_{i}(\pi)+2d_{i}(\pi)=n+1$. The converse
is similar.

Part (c) is immediate from $\pi$ avoiding $213$.

For (d), suppose that $i$ and $j$ are both descents and fixed points
of $\pi$, and assume $i\leq j$ without loss of generality. Then,
by (b), we have that $a_{i}(\pi)+2d_{i}(\pi)=a_{j}(\pi)+2d_{j}(\pi)$.
Moreover, $a_{i}(\pi)\leq a_{j}(\pi)$ and $d_{i}(\pi)\leq d_{j}(\pi)$,
which together with $a_{i}(\pi)+2d_{i}(\pi)=a_{j}(\pi)+2d_{j}(\pi)$
forces $a_{i}(\pi)=a_{j}(\pi)$ and $d_{i}(\pi)=d_{j}(\pi)$. This
implies $i=j$, and the result follows.

Finally, (e) is an immediate consequence of (a)\textendash (c).
\end{proof}

\subsection{From \texorpdfstring{$\{213,231\}$}{\{213,231\}}-avoiding derangements
to \texorpdfstring{$\{213,231\}$}{\{213,231\}}-avoiding desarrangements}

To construct the bijection $\Psi$, we will first give a bijection
between the underlying $AD$-words of $\{213,231\}$-avoiding derangements
and $\{213,231\}$-avoiding desarrangements. To that end, let us define
the appropriate sets of $AD$-words. Below, $a_{i}(w)$ is the number
of $A$s and $d_{i}(w)$ the number of $D$s in the $i$th prefix
$w_{1}w_{2}\cdots w_{i}$ of the word $w=w_{1}w_{2}\cdots w_{n}$.
\begin{itemize}
\item Let $\mathcal{W}_{n}^{(f)}$ denote the set of words $w=w_{1}w_{2}\cdots w_{n}\in\mathcal{W}_{n}$
where $w_{1}=D$ and there does not exist $2\leq i\leq n$ such that
$w_{i}=D$ and $a_{i}(w)+2d_{i}(w)=n+2$. By Lemma \ref{l-213-231-fix}
(e), $\mathcal{W}_{n-1}^{(f)}$ consists of precisely the $AD$-words
of permutations in $\mathcal{D}_{n}(213,231)$.
\item Let $\mathcal{W}_{n}^{(p)}$ consist of all words in $\mathcal{W}_{n}$
where the first $A$ is in an even position, along with the word $D^{n}$
of all $D$s if and only if $n$ is odd. Then the words in $\mathcal{W}_{n-1}^{(p)}$
are precisely the $AD$-words of length $n$ desarrangements.
\end{itemize}
So, we will define a bijection $\psi\colon\mathcal{W}_{n}^{(f)}\rightarrow\mathcal{W}_{n}^{(p)}$.
For this purpose, we shall need to associate each of $\mathcal{W}_{n}^{(f)}$
and $\mathcal{W}_{n}^{(p)}$ with a set of integer compositions.
\begin{lem}
\label{l-dercomp}Let $L=(L_{1},L_{2},\dots,L_{n})$ be a composition
with all parts $1$ or $2$. Then the following are equivalent\textup{:}
\begin{enumerate}
\item [\normalfont{(i)}] There exists a \textup{(}unique\textup{)}
$i\in[n]$ such that $\sum_{j=1}^{i}L_{j}=n$.
\item [\normalfont{(ii)}] There does not exist an $i\in[n]$ such that
$L_{i}=2$ and $\sum_{j=1}^{i}L_{j}=n+1$.
\item [\normalfont{(iii)}] There exists a \textup{(}unique\textup{)}
$i\in[n]$ such that the number of 2s in the prefix $(L_{1},L_{2},\dots,L_{i})$
is equal to $n-i$, the total number of parts of the suffix $(L_{i+1},L_{i+2},\dots,L_{n})$.
\end{enumerate}
Furthermore, the unique $i$ in conditions \textup{(i)} and \textup{(iii)}
are the same.

\end{lem}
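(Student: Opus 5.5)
Write $S_i=\sum_{j=1}^{i}L_j$ for $0\le i\le n$, with $S_0=0$. Since every part is $1$ or $2$, the partial sums are strictly increasing, $S_n\ge n$, and consecutive partial sums differ by $1$ or $2$. The whole argument rests on this ``no skipping by more than one'' property. Uniqueness in (i) is immediate from strict monotonicity. For uniqueness in (iii), let $t_i$ be the number of $2$s among $L_1,\dots,L_i$, so that $S_i=i+t_i$. Then condition (iii) at index $i$ reads $t_i=n-i$, i.e.\ $S_i=n$. So (iii) is literally a restatement of (i), with the same index, and uniqueness transfers. This gives (i)$\Leftrightarrow$(iii) and the final assertion that the indices agree.

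It remains to prove (i)$\Leftrightarrow$(ii). First suppose (i) fails. Since $S_0=0<n\le S_n$ and $S$ is increasing, there is a unique $i\ge1$ with $S_{i-1}<n<S_i$. Because $S_i-S_{i-1}\le2$ and no partial sum equals $n$, this forces $S_{i-1}=n-1$, $S_i=n+1$, and hence $L_i=2$. Such an index violates (ii), so (ii) implies (i).

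Conversely, suppose (i) holds with $S_k=n$, and suppose some $i$ has $L_i=2$ and $S_i=n+1$. Then $S_{i-1}=n-1$. But $S_k=n$ lies strictly between $S_{i-1}$ and $S_i$, which is impossible since $k$ would have to satisfy $i-1<k<i$. So (i) implies (ii).

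There is no real obstacle here. The only point needing care is the boundary: $i$ ranges over $[n]$, so I use $S_0=0$ to guarantee that the crossing index $i$ exists with $i\ge1$ and that $S_n\ge n$ is actually reached or passed. One should also note that the crossing index cannot exceed $n$, because $S_n\ge n$.
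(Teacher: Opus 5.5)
Your proof is correct and follows the paper's route: you obtain (i)$\Leftrightarrow$(iii) from $S_i=i+t_i$ exactly as the paper does, and your ``partial sums step by at most $2$, so they cannot jump over $n$ without an $L_i=2$ landing on $n+1$'' argument is precisely the detail behind the paper's ``readily seen'' equivalence (i)$\Leftrightarrow$(ii). One small caveat: your step $S_0=0<n$ tacitly assumes $n\ge1$. That assumption is genuinely needed, because for $n=0$ condition (ii) holds vacuously while (i) fails, which is consistent with the paper fixing $\Comp_0^{(f)}$ by a separate convention.
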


\begin{proof}
The equivalence between (i) and (ii) is readily seen. Moreover, taking
$t(i)$ to be the number of 2s in $(L_{1},L_{2},\dots,L_{i})$, the
equivalence of (i) and (iii) is a straightforward consequence of the
observation that $t(i)+i=\sum_{j=1}^{i}L_{j}$. We omit the details.
\end{proof}
Let $\Comp_{n}^{(f)}$ be the set of compositions satisfying the equivalent
conditions in Lemma~\ref{l-dercomp}. (And by convention, let $\Comp_{0}^{(f)}$
consist of the empty composition.) For instance, we have 
\[
\Comp_{3}^{(f)}=\{(1,1,1),(2,1,1),(2,1,2),(1,2,1),(1,2,2)\}.
\]

\begin{prop}
\label{p-derbij}For all $n\geq1$, the sets $\mathcal{W}_{n}^{(f)}$
and $\Comp_{n-1}^{(f)}$ are in bijection.
\end{prop}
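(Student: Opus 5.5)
The bijection I would use is the obvious letter-to-part encoding, applied after deleting the forced first letter. Every word $w=w_{1}w_{2}\cdots w_{n}\in\mathcal{W}_{n}^{(f)}$ has $w_{1}=D$. So I would define $L(w)=(L_{1},L_{2},\dots,L_{n-1})$ by setting $L_{j}=1$ if $w_{j+1}=A$ and $L_{j}=2$ if $w_{j+1}=D$, for $j\in[n-1]$.

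Without any restriction, this map is a bijection between
\begin{itemize}
\item words in $\mathcal{W}_{n}$ with $w_{1}=D$, and
\item compositions with exactly $n-1$ parts, all equal to $1$ or $2$.
\end{itemize}
Its inverse prepends $D$ and reads $1\mapsto A$, $2\mapsto D$. For $n=1$ the word $D$ goes to the empty composition, which lies in $\Comp_{0}^{(f)}$ by convention. It remains to show that the defining condition of $\mathcal{W}_{n}^{(f)}$ corresponds exactly to condition (ii) of Lemma~\ref{l-dercomp}, read with $n-1$ parts.

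The key computation relates partial sums of $L$ to prefix counts of $w$. For $2\leq i\leq n$, the prefix $(L_{1},\dots,L_{i-1})$ comes from $w_{2}\cdots w_{i}$. That word contains $a_{i}(w)$ letters $A$ and $d_{i}(w)-1$ letters $D$, because $w_{1}=D$ is removed. Hence
\[
\sum_{j=1}^{i-1}L_{j}=a_{i}(w)+2\bigl(d_{i}(w)-1\bigr)=a_{i}(w)+2d_{i}(w)-2 .
\]
Also, $L_{i-1}=2$ holds exactly when $w_{i}=D$.

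Therefore the following are equivalent:
\begin{itemize}
\item there exists $2\leq i\leq n$ with $w_{i}=D$ and $a_{i}(w)+2d_{i}(w)=n+2$;
\item there exists $k=i-1\in[n-1]$ with $L_{k}=2$ and $\sum_{j=1}^{k}L_{j}=n=(n-1)+1$.
\end{itemize}
The second statement is precisely the negation of condition (ii) in Lemma~\ref{l-dercomp} for compositions with $n-1$ parts. So $w\in\mathcal{W}_{n}^{(f)}$ if and only if $L(w)\in\Comp_{n-1}^{(f)}$, and the encoding restricts to the desired bijection.

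I do not expect a real obstacle here. The only places needing care are the index shift $i\mapsto i-1$ and the correction term $-2$ from the deleted initial $D$, which together turn $n+2$ into $n=(n-1)+1$. The case $n=1$ is handled by the empty-composition convention.
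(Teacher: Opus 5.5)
Your proposal is correct and is essentially the paper's proof: your map $L$ is the paper's $\gamma$, which deletes the initial $D$ and encodes $A\mapsto 1$, $D\mapsto 2$. You also make explicit the step the paper leaves as ``translating the definition,'' namely the identity $\sum_{j=1}^{i-1}L_{j}=a_{i}(w)+2d_{i}(w)-2$ and the resulting match with condition (ii) of Lemma~\ref{l-dercomp} for compositions with $n-1$ parts.
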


\begin{proof}
Let $w\in\mathcal{W}_{n}^{(f)}$. We obtain the composition $\gamma(w)=(L_{1},L_{2},\dots,L_{n-1})$
by taking the $n-1$ non-initial letters of $w$, replacing each $A$
with a 1, and replacing each $D$ with a 2:
\[
L_{i}=\begin{cases}
1, & \text{if }w_{i+1}=A,\\
2, & \text{if }w_{i+1}=D,
\end{cases}
\]
for all $i\in[n-1]$. Translating the definition of $\mathcal{W}_{n}^{(f)}$
in terms of the composition $\gamma(w)$, we see that there does not
exist an $i\in[n-1]$ such that $L_{i}=2$ and $\sum_{j=1}^{i}L_{j}=n$.
Hence, $\gamma(w)\in\Comp_{n-1}^{(f)}$, and it is clear that $\gamma\colon\mathcal{W}_{n}^{(f)}\rightarrow\Comp_{n-1}^{(f)}$
is a bijection.
\end{proof}
Let $\Comp_{n}^{(p)}$ denote the set of compositions of $n$ into
one type of 1 and two types of 2, denoted $2$ and $\bar{2}$. For
example, we have 
\[
\Comp_{3}^{(p)}=\{(1,1,1),(2,1),(\bar{2},1),(1,2),(1,\bar{2})\}.
\]
Be aware that the $n$ in the notation $\Comp_{n}^{(f)}$ refers to
the number of parts, whereas in $\Comp_{n}^{(p)}$ it refers to the
sum of the parts.
\begin{prop}
\label{p-213-231-derjac}For all $n\geq0$, the sets $\Comp_{n}^{(f)}$
and $\Comp_{n}^{(p)}$ are in bijection.
\end{prop}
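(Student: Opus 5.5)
The plan is to use the characterization of $\Comp_{n}^{(f)}$ in Lemma~\ref{l-dercomp}, specifically conditions (i) and (iii) and the fact that they single out the same index $i$. Each $L\in\Comp_{n}^{(f)}$ will be split at that index into a prefix and a suffix. The suffix will then be read as a list of instructions telling us which $2$s in the prefix should receive a bar.

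\textbf{The map.} The case $n=0$ is trivial, since both sets consist of the empty composition, so assume $n\geq1$. Let $L=(L_{1},\dots,L_{n})\in\Comp_{n}^{(f)}$, and let $i$ be the unique index with $\sum_{j=1}^{i}L_{j}=n$.
\begin{itemize}
\item By (i), the prefix $(L_{1},\dots,L_{i})$ is a composition of $n$ with parts $1$ and $2$.
\item By (iii), which holds at the same $i$, the number $t$ of $2$s in this prefix equals the length $n-i$ of the suffix $(L_{i+1},\dots,L_{n})$.
\end{itemize}
Define $\beta(L)$ from the prefix as follows. For each $k\in[t]$, keep the $k$th $2$ of the prefix as $2$ if $L_{i+k}=1$, and replace it by $\bar{2}$ if $L_{i+k}=2$. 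All $1$s are left unchanged. The parts of $\beta(L)$ still sum to $n$, so $\beta(L)\in\Comp_{n}^{(p)}$.

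\textbf{The inverse.} Let $M\in\Comp_{n}^{(p)}$ have $i$ parts, $t$ of which are $2$ or $\bar{2}$. Build a composition in three steps.
\begin{itemize}
\item Erase all bars in $M$; this gives a composition $(L_{1},\dots,L_{i})$ of $n$ with parts $1$ and $2$.
\item Append $t$ further parts. For each $k\in[t]$, the $k$th appended part is $1$ if the $k$th two-type part of $M$ is $2$, and is $2$ if it is $\bar{2}$.
\item The total number of parts is $i+t$. This equals $n$ because the parts of $M$ sum to $n$, each $1$ contributes $1$ and each two-type part contributes $2$, so $n=(i-t)+2t=i+t$.
\end{itemize}
Since the first $i$ parts sum to $n$, condition (i) holds. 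Hence the result lies in $\Comp_{n}^{(f)}$.

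\textbf{Verification.} The two maps are mutually inverse because the splitting index is forced on both sides. On the $\Comp_{n}^{(f)}$ side, $i$ is the unique index from Lemma~\ref{l-dercomp}. On the $\Comp_{n}^{(p)}$ side, $i$ is simply the number of parts of $M$, and the constructed composition has its prefix sum equal to $n$ at exactly that index. The only point that needs care is the well-definedness of $\beta$, namely that the suffix length exactly matches the number of $2$s in the prefix. This is precisely what the coincidence of the indices in (i) and (iii) provides. As a sanity check, for $n=3$ both sets have $5$ elements; for instance $(2,1,2)\mapsto(\bar{2},1)$ and $(1,2,1)\mapsto(1,2)$.
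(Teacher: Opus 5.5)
Your proposal is correct and is essentially the paper's proof: your map $\beta$ is exactly the paper's $\lambda$, which splits $L$ at the index where the prefix sum equals $n$ and uses the suffix as a key to decide which $2$s of the prefix receive a bar. Your inverse also matches the paper's, and your count $n=(i-t)+2t=i+t$ supplies the detail the paper leaves as ``readily seen.''
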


\begin{proof}
Let $L\in\Comp_{n}^{(f)}$, so that $\sum_{j=1}^{i}L_{j}=n$ for some
$i\in[n]$. Define $J=(L_{1},L_{2},\dots,L_{i})$ and $K=(L_{i+1},L_{i+2},\dots,L_{n})$;
then the number of 2s in $J$ is equal to the number of parts of $K$.
We will now use the composition $K$ as a ``key'' to decorate a subset
of the 2s of $J$ with bars. To do so, iterate through the parts of
$K$ from left to right. If the $j$th part of $K$ is a 2, then decorate
the $j$th 2 of $J$ (from left to right) with a bar. The result\textemdash denote
it $\lambda(L)$\textemdash is a composition in $\Comp_{n}^{(p)}$.

For example, if $L=(2,2,2,1,1,2,1,2,2,1,2)$, then we have $J=(2,2,2,1,1,2,1)$
and $K=(2,2,1,2)$. Using $K$ to decorate $J$, we arrive at $\lambda(L)=(\bar{2},\bar{2},2,1,1,\bar{2},1)$.

It is readily seen that $\lambda\colon\Comp_{n}^{(f)}\rightarrow\Comp_{n}^{(p)}$
is a bijection. In short, given $M\in\Comp_{n}^{(p)}$, we can use
the bars to determine $K$, erase the bars from $M$ to obtain $J$,
and then concatenate $J$ and $K$ to obtain $\lambda^{-1}(M)$.
\end{proof}
\begin{prop}
\label{p-desbij}For all $n\geq1$, the sets $\Comp_{n-1}^{(p)}$
and $\mathcal{W}_{n}^{(p)}$ are in bijection.
\end{prop}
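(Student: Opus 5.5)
We construct an explicit map $\mu\colon\Comp_{n-1}^{(p)}\rightarrow\mathcal{W}_{n}^{(p)}$ that reads the composition from left to right and replaces each part by a short block of letters. The natural choice is
\[
1\mapsto DA,\qquad 2\mapsto DDDA\ \text{(or similar)},
\]
but the lengths must work out: a part $1$ should contribute one letter and a part $2$ or $\bar{2}$ should contribute two letters. Then a composition of $n-1$ gives a word of length $n-1$, and we prepend one more letter to reach length $n$. Concretely, we set
\[
1\mapsto A,\qquad 2\mapsto DD,\qquad \bar{2}\mapsto DA,
\]
and define $\mu(M)$ to be the letter $D$ followed by the concatenation of the blocks of $M=(M_{1},\dots,M_{k})$. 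The resulting word has length $1+(n-1)=n$.

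First we check that the image lies in $\mathcal{W}_{n}^{(p)}$. Write the word as $D$ followed by the blocks. Before the first part equal to $1$ or $\bar{2}$, every block is $DD$, so the prefix consists of $D$s and has odd length $1+2m$, where $m$ is the number of leading $2$s.
\begin{itemize}
\item If the next part is $1$, the first $A$ sits in position $2m+2$, which is even.
\item If the next part is $\bar{2}$, the block is $DA$, so the first $A$ sits in position $2m+3$, which is odd.
\end{itemize}
So this choice fails. We swap the roles: take $\bar{2}\mapsto AD$, or else prepend nothing and modify the blocks. Parity bookkeeping suggests the following: prepend $D$, and use $1\mapsto A$, $2\mapsto DD$, $\bar{2}\mapsto AD$. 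Then the first $A$ comes from the first non-$2$ part, which lies after an odd-length all-$D$ prefix, so it is always in an even position. If every part is $2$, then $n-1$ is even and the word is $D^{n}$ with $n$ odd, which is exactly the allowed exceptional word. Hence $\mu(M)\in\mathcal{W}_{n}^{(p)}$.

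For bijectivity we construct the inverse. Given $w\in\mathcal{W}_{n}^{(p)}$, we have $w_{1}=D$: either the first $A$ is in an even position $\ge 2$, or $w=D^{n}$. Delete $w_{1}$ and parse the remaining word $w_{2}\cdots w_{n}$ greedily from the left:
\begin{itemize}
\item $A$ followed by $D$ is read as $\bar{2}$;
\item $A$ followed by $A$, or $A$ as the last letter, is read as $1$;
\item $DD$ is read as $2$.
\end{itemize}

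The main obstacle is that the parse must never get stuck on a lone $D$. With the blocks above, a $D$ preceded by a $1$ is ambiguous, because $A$ then $D$ could be either $1$ followed by the start of $DD$, or $\bar{2}$. So the block choice needs a final adjustment to give a prefix code compatible with the parity constraint. We would fix this by choosing blocks so that every block begins with the letter that determines it, for example $1\mapsto A$, $2\mapsto DD$, $\bar{2}\mapsto DA$, while absorbing the parity shift with the initial letter (prepending $A$ versus $D$) and rechecking the count $\#\Comp_{n-1}^{(p)}=\#\mathcal{W}_{n}^{(p)}$ via the recurrence $c_{m}=c_{m-1}+2c_{m-2}$.

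Alternatively, we can prove the statement by showing that both sides satisfy this recurrence with the same initial values, and extract the bijection from the recursive decomposition. This means peeling off the last part of the composition and correspondingly the last one or two letters of the word, then verifying that the even-first-$A$ condition is preserved. This parity verification at the last letters is the step we expect to require the most care.
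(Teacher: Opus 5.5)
\textbf{There is a genuine gap, in bijectivity.} Your second construction (prepend $D$, then $1\mapsto A$, $2\mapsto DD$, $\bar{2}\mapsto AD$) is exactly the paper's map $\eta$ with the two kinds of $2$ relabelled. Your check that it lands in $\mathcal{W}_{n}^{(p)}$, including the exceptional word $D^{n}$, is correct. The paper regards bijectivity as clear, but your proposal ends up abandoning it.

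A left-to-right parse with one letter of lookahead cannot decide whether $AD$ is a $\bar{2}$ or a $1$ followed by the start of a $DD$. From this you conclude that the blocks must become a prefix code, and the change you propose does not work. With a prepended $D$, the assignment $\bar{2}\mapsto DA$ is precisely the one you had already shown puts the first $A$ in the odd position $2m+3$. Prepending $A$ (in any case) puts an $A$ in position $1$, so such a word never lies in $\mathcal{W}_{n}^{(p)}$. As written, the proposal does not produce a bijection. The recurrence route in your last paragraph is announced rather than carried out.

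\textbf{The missing observation.} No prefix code is needed: $\{A,AD,DD\}$ is a suffix code, so your original map is uniquely decodable from the right. In $w_{2}\cdots w_{n}$, a final $A$ is a part $1$. A final $D$ together with its left neighbor is a $\bar{2}$ or a $2$ according as that neighbor is $A$ or $D$. This procedure can fail only by leaving a lone $D$ at the front, and for $w\in\mathcal{W}_{n}^{(p)}$ that cannot happen. Every $A$ begins a block. The $D$s preceding the first $A$ in $w_{2}\cdots w_{n}$ form a run of even length, because that $A$ sits in an even position of $w$, or else $w=D^{n}$ with $n$ odd. That run splits into blocks $DD$. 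Each block has length equal to its part, so the output lies in $\Comp_{n-1}^{(p)}$, and decoding is inverse to your map.

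Equivalently, read left to right. The initial run $D^{2m}$ of $w_{2}\cdots w_{n}$ decodes as $m$ copies of $2$. Each subsequent maximal factor $AD^{r}$ decodes as $1$ followed by $r/2$ copies of $2$ if $r$ is even, and as $\bar{2}$ followed by $(r-1)/2$ copies of $2$ if $r$ is odd. It is the parity of the following $D$-run, not the next letter, that resolves your ambiguity.

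\textbf{Your recurrence idea.} Peeling off the last one or two letters is this same right-to-left decoding. To finish it you would check three things. Deleting a final $A$ maps the words of $\mathcal{W}_{n}^{(p)}$ ending in $A$ bijectively onto $\mathcal{W}_{n-1}^{(p)}$. Deleting a final $AD$, resp.\ $DD$, maps the words ending in $AD$, resp.\ $DD$, bijectively onto $\mathcal{W}_{n-2}^{(p)}$. The only cases needing care are $D^{n-1}A$, $D^{n-2}AD$ and $D^{n}$.
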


\begin{proof}
Traverse through the parts of $L\in\Comp_{n-1}^{(p)}$ from left to
right, replacing each $1$ with $A$, each 2 with $AD$, and each
$\bar{2}$ with $DD$. Next, preprend a $D$ to this concatenation,
which results in a word $\eta(L)$ of length $n$. Notice that the
first $A$ in $\eta(L)$ appears when the first 1 or 2 appears in
$L$, and since each $\bar{2}$ is mapped to two letters and we have
prepended a $D$, there is an odd number of $D$s before the first
$A$ (or $\eta(L)$ consists only of an odd number of $D$s). Hence,
$\eta(L)\in\mathcal{W}_{n}^{(p)}$, and $\eta$ is clearly a bijection.
\end{proof}
Thus $\psi\colon\mathcal{W}_{n}^{(f)}\rightarrow\mathcal{W}_{n}^{(p)}$
defined by 
\[
\psi=\eta\circ\lambda\circ\gamma
\]
is a bijection. Moreover, the composition $\theta_{213,231}\circ\psi\circ\ad$
is a bijection from $\mathcal{D}_{n}(213,231)$ to $\mathfrak{D}_{n}(213,231)$.
\begin{example}
Take $\pi=13\,12\,11\,10\,1\,2\,9\,3\,8\,7\,4\,6\,5\in\mathcal{D}_{13}(213,231)$.
Then 
\[
\ad(\pi)=DDDDAADADDAD,\quad\text{so}\quad\gamma(\ad(\pi))=(2,2,2,1,1,2,1,2,2,1,2).
\]
As shown in the proof of Proposition \ref{p-213-231-derjac}, we have
$\lambda(\gamma(\ad(\pi)))=(\bar{2},\bar{2},2,1,1,\bar{2},1)$, whence
\[
\psi(\ad(\pi))=\eta(\lambda(\gamma(\ad(\pi))))=DDDDDADAADDA.
\]
Then 
\[
\theta_{213,231}(DDDDDADAADDA)=13\,12\,11\,10\,9\,1\,8\,2\,3\,7\,6\,4\,5
\]
is the unique permutation in $\mathfrak{D}_{13}(213,231)$ with $AD$-word
$DDDDDADAADDA$.
\end{example}

By the construction of $\psi$, the words $w$ and $\psi(w)$ have
the same number of $D$s for all $w\in\mathcal{W}_{n}^{(f)}$. Hence,
$\theta_{213,231}\circ\psi\circ\ad$ is a $\des$-preserving bijection
from $\mathcal{D}_{n}(213,231)$ to $\mathfrak{D}_{n}(213,231)$.

\subsection{An auxiliary bijection}

We seek to extend the bijection $\psi$ to our desired bijection $\Psi$.
Let us first describe the basic idea of this extension, which is similar
to that of extending D\'{e}sarm\'{e}nien's bijection $\Phi$ to
the full symmetric group $\mathfrak{S}_{n}$. 

Take the $AD$-word of a permutation in $\mathfrak{S}_{n}(213,231)$,
and remove its initial run of $A$s. The result will be the $AD$-word
of a $\{213,231\}$-avoiding derangement, or of a $\{213,231\}$-avoiding
permutation with a single fixed point that is a descent. In the first
case, we apply the earlier bijection $\psi$ and prepend the initial
run of $A$s that was removed earlier. The second case will require
an auxiliary bijection $\hat{\psi}$, involving the following sets
of words:
\begin{itemize}
\item Let $\widehat{\mathcal{W}}_{n}^{(f)}$ denote the set of words $w\in\mathcal{W}_{n}$
where $w_{1}=D$ and there exists $2\leq i\leq n$ such that $w_{i}=D$
and $a_{i}(w)+2d_{i}(w)=n+2$. By Lemma \ref{l-213-231-fix}, $\widehat{\mathcal{W}}_{n-1}^{(f)}$
consists of precisely the $AD$-words of permutations in $\mathfrak{S}_{n}(213,231)$
whose only fixed point is a descent.
\item Let $\widehat{\mathcal{W}}_{n}^{(p)}\coloneqq\{\,Dw:w\in\mathcal{W}_{n-1}^{(p)}\,\}$,
so that the words in $\widehat{\mathcal{W}}_{n-1}^{(p)}$ are precisely
the $AD$-words of length $n$ permutations whose only pixed point
is a descent.
\end{itemize}
Notice that any $w\in\mathcal{W}_{n}$ beginning with $D$ is either
in $\mathcal{W}_{n}^{(f)}$ or $\widehat{\mathcal{W}}_{n}^{(f)}$,
and is either in $\mathcal{W}_{n}^{(p)}$ or $\widehat{\mathcal{W}}_{n}^{(p)}$.
This fact will be used implicitly in Lemmas \ref{l-fixw}\textendash \ref{l-pixw}.
\begin{prop}
For all $n\geq2$, the sets $\widehat{\mathcal{W}}_{n}^{(f)}$ and
$\Comp_{n-2}^{(f)}$ are in bijection.
\end{prop}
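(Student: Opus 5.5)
The plan is to mimic the proof of Proposition \ref{p-derbij}: encode the non-initial letters of $w$ as a composition with parts $1$ and $2$. The special position $i$ guaranteed by the definition of $\widehat{\mathcal{W}}_{n}^{(f)}$ becomes a part equal to $2$ that ends a prefix summing to $n$. Deleting that single part should give a composition in $\Comp_{n-2}^{(f)}$.

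\textbf{Forward map.} Let $w\in\widehat{\mathcal{W}}_{n}^{(f)}$. As in Proposition \ref{p-derbij}, form $L=(L_{1},\dots,L_{n-1})$ by setting $L_{j}=1$ if $w_{j+1}=A$ and $L_{j}=2$ if $w_{j+1}=D$. Since $w_{1}=D$, for every $m\geq2$ we have
\[
\sum_{j=1}^{m-1}L_{j}=a_{m}(w)+2d_{m}(w)-2 .
\]
So the defining condition of $\widehat{\mathcal{W}}_{n}^{(f)}$ says exactly this: there is an index $i$ with $2\leq i\leq n$ such that $L_{i-1}=2$ and $L_{1}+\cdots+L_{i-1}=n$.

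This $i$ is unique, because all parts are positive, so at most one prefix of $L$ sums to $n$. (This is the word analogue of Lemma \ref{l-213-231-fix}(d).)

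Now delete the part $L_{i-1}$ and call the result $L'$. Then $L'$ has $n-2$ parts, all equal to $1$ or $2$. Its prefix $(L_{1},\dots,L_{i-2})$ sums to $n-2$. This is condition (i) of Lemma \ref{l-dercomp} with $n-2$ in place of $n$, so $L'\in\Comp_{n-2}^{(f)}$.

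The degenerate case $n=2$ deserves a check: here $w=DD$ and $L'$ is the empty composition, which matches the convention for $\Comp_{0}^{(f)}$.

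\textbf{Inverse map.} Given $L'\in\Comp_{n-2}^{(f)}$, take the unique prefix of $L'$ summing to $n-2$; this prefix may be empty only when $n=2$. Insert a part $2$ immediately after this prefix. The result is a composition with $n-1$ parts whose prefix ending at the new $2$ sums to $n$. Translate it back to a word by $1\mapsto A$ and $2\mapsto D$, and prepend $D$. By the displayed identity read in reverse, the resulting word lies in $\widehat{\mathcal{W}}_{n}^{(f)}$, with the inserted $2$ sitting at the special position $i$.

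\textbf{Bijectivity.} Uniqueness of the special index on both sides (in $w$ and in $L'$) shows that the two maps are mutually inverse.

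I do not expect a serious obstacle. The only points needing care are the index shift between positions in $w$ and parts of $L$ (the ``$-2$'' coming from the initial $D$), and the uniqueness of $i$, which is what makes deleting or inserting the part $2$ well defined.
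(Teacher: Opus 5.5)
Your proposal is correct and is essentially the paper's proof. Like the paper, you encode the non-initial letters of $w$ as parts $1$ and $2$, delete the unique part $2$ that completes a prefix sum of $n$, and for the inverse insert a $2$ after the unique prefix summing to $n-2$. Your identity $\sum_{j=1}^{m-1}L_j=a_m(w)+2d_m(w)-2$ and your separate check of $n=2$ make the index bookkeeping cleaner than in the paper, whose inverse step writes the prefix sum as $n$ where $n-2$ is meant.
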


We prove this proposition by giving a variant of the bijection $\gamma\colon\mathcal{W}_{n}^{(f)}\rightarrow\Comp_{n-1}^{(f)}$
defined in the proof of Proposition \ref{p-derbij}.
\begin{proof}
Let $w\in\widehat{\mathcal{W}}_{n}^{(f)}$. Define $L=(L_{1},L_{2},\dots,L_{n-1})$
by 
\[
L_{i}=\begin{cases}
1, & \text{if }w_{i+1}=A,\\
2, & \text{if }w_{i+1}=D,
\end{cases}
\]
for all $i\in[n-1]$. Translating the definition of $\widehat{\mathcal{W}}_{n}^{(f)}$
in terms of $L$, there exists a unique $i\in[n-1]$ such that $L_{i}=2$
and $\sum_{j=1}^{i}L_{j}=n$. Now, let $\hat{\gamma}(w)$ be the composition
obtained by deleting the part $L_{i}$ from $L$. Then $\sum_{j=1}^{i-1}L_{j}=n-2$,
which means that $\hat{\gamma}(w)\in\Comp_{n-2}^{(f)}$. 

Every step of the above procedure is reversible. In particular, given
a composition $M=(M_{1},M_{2},\dots,M_{n-2})\in\Comp_{n-2}^{(f)}$,
we know there is a unique $i\in[n-2]$ such that $\sum_{j=1}^{i}M_{j}=n$,
so we insert a 2 immediately after $M_{i}$ before converting the
resulting composition back into a word. Therefore, $\hat{\gamma}\colon\widehat{\mathcal{W}}_{n}^{(f)}\rightarrow\Comp_{n-2}^{(f)}$
is a bijection.
\end{proof}
\begin{prop}
For all $n\geq2$, the sets $\Comp_{n-2}^{(p)}$ and $\widehat{\mathcal{W}}_{n}^{(p)}$
are in bijection.
\end{prop}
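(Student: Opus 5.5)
The plan is to reuse the bijection $\eta\colon\Comp_{n-1}^{(p)}\rightarrow\mathcal{W}_{n}^{(p)}$ from Proposition \ref{p-desbij} and prepend one more letter. By definition, $\widehat{\mathcal{W}}_{n}^{(p)}=\{\,Dw:w\in\mathcal{W}_{n-1}^{(p)}\,\}$. The map $w\mapsto Dw$ is therefore a bijection from $\mathcal{W}_{n-1}^{(p)}$ onto $\widehat{\mathcal{W}}_{n}^{(p)}$, with inverse given by deleting the first letter. So I would define
\[
\hat{\eta}(L)\coloneqq D\,\eta(L)\qquad\text{for }L\in\Comp_{n-2}^{(p)}.
\]
Since $n\geq2$, the index $n-1\geq1$ lies in the range where Proposition \ref{p-desbij} applies, so $\eta\colon\Comp_{n-2}^{(p)}\rightarrow\mathcal{W}_{n-1}^{(p)}$ is a bijection. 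Composing it with the prepending bijection gives the claim.

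Concretely, $\hat{\eta}$ reads $L$ from left to right and replaces each $1$ by $A$, each $2$ by $AD$, and each $\bar{2}$ by $DD$. It then prepends $DD$. The resulting word has length $(n-2)+2=n$.

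To keep the argument self-contained, I would also check membership directly. The word $\eta(L)$ has an odd number of $D$s before its first $A$, or consists of an odd number of $D$s only. This is exactly the condition for lying in $\mathcal{W}_{n-1}^{(p)}$, so $D\eta(L)$ lies in $\widehat{\mathcal{W}}_{n}^{(p)}$.

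For the inverse, take a word in $\widehat{\mathcal{W}}_{n}^{(p)}$ and strip its first two letters, which are both $D$. The first of these is the prepended $D$. The second is the leading $D$ of a word in $\mathcal{W}_{n-1}^{(p)}$: such a word begins with $D$ because its first $A$, if there is one, is in an even position. Next, parse the remaining word greedily from left to right: $A$ followed by $D$ decodes to $2$, a lone $A$ decodes to $1$, and $DD$ decodes to $\bar{2}$. The only point needing care is that this parse is well defined. Every $D$ in the image of $\eta$ either follows an $A$ as part of a $2$, or is part of a $DD$ block beginning at a part boundary. Hence after removing the prefix, the $D$s before the first $A$ come in an even number. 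This follows from the odd-count condition, and it means they split cleanly into $\bar{2}$s. This is the same reasoning already left implicit as ``clearly a bijection'' in Proposition \ref{p-desbij}, so I expect no real obstacle.
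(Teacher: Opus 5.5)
Your core argument is correct and matches the paper. Your $\hat{\eta}(L)=D\,\eta(L)$ is exactly the paper's map, ``same as $\eta$ but prepend $DD$''. Composing the bijection $\eta\colon\Comp_{n-2}^{(p)}\to\mathcal{W}_{n-1}^{(p)}$ from Proposition \ref{p-desbij} (valid since $n-1\geq1$) with $w\mapsto Dw$ already proves the claim.

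Your optional ``self-contained'' inverse, however, is wrong as stated: greedy left-to-right decoding fails. Take $L=(1,\bar{2})$, so that $\hat{\eta}(L)=DDADD$. After stripping $DD$ you have $ADD$. Your rule reads $AD$ as a part $2$ and strands the final $D$, whereas the correct parse is $A\,|\,DD$. The even number of leading $D$s only handles the initial run; it says nothing about the ambiguity after an $A$.

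A clean fix is to decode from right to left:
\begin{itemize}
\item a final $A$ is a part $1$;
\item a final $D$, together with the letter just before it, is forced to be $AD$ (part $2$) or $DD$ (part $\bar{2}$).
\end{itemize}
Every $A$ starts its block, so what remains once the leftmost $A$ is consumed is the leading run of $D$s. That run has even length, so the parse never strands a $D$, and uniqueness is immediate.

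Since your proof does not rely on that paragraph, you can also simply drop it.
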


\begin{proof}
Define $\hat{\eta}\colon\Comp_{n-2}^{(p)}\rightarrow\widehat{\mathcal{W}}_{n}^{(p)}$
in the same way as $\eta$ in the proof of Proposition \ref{p-desbij}
except that we prepend $DD$ instead of $D$. Then $\hat{\eta}$ is
clearly a bijection.
\end{proof}
Let
\[
\hat{\psi}\coloneqq\hat{\eta}\circ\lambda\circ\hat{\gamma}
\]
be our desired bijection from $\widehat{\mathcal{W}}_{n}^{(f)}$ to
$\mathcal{\widehat{\mathcal{W}}}_{n}^{(p)}$. Like $\psi$, this bijection
preserves the number of $D$s.
\begin{example}
\label{eg-hatbij}Take $w=DADDADAD\in\widehat{\mathcal{W}}_{8}^{(f)}$.
Replacing each $A$ with 1 and each (non-initial) $D$ with 2 yields
\[
L=(1,2,2,1,2,1,2).
\]
Because $1+2+2+1+2=9-1$, we remove the third $2$ from $L$ to obtain
\[
\hat{\gamma}(w)=(1,2,2,1,1,2).
\]
The composition $\hat{\gamma}(w)$ splits as $J=(1,2,2,1)$ and $K=(1,2)$,
and we use $K$ to decorate $J$:
\[
\lambda(\hat{\gamma}(w))=(1,2,\bar{2},1).
\]
This yields
\[
\hat{\psi}(w)=\hat{\eta}(\lambda(\hat{\gamma}(w)))=DDAADDDA\in\widehat{\mathcal{W}}_{8}^{(p)}.
\]
Note that $w=DADDADAD$ is the $AD$-word of $91872635\in\mathfrak{S}_{9}(213,231)$,
whose only fixed point is a descent. Similarly, $\hat{\psi}(w)=DDAADDDA$
is the $AD$-word of $981276534\in\mathfrak{S}_{9}(213,231)$, whose
only pixed point is a descent.
\end{example}

\subsection{The bijection \texorpdfstring{$\Psi$}{Psi}}

Now that we have the bijections $\psi$ and $\hat{\psi}$, we can
use these to construct $\Psi$. Notice that any nonempty word $w\in\mathcal{W}_{n}$
can be written as $w=A^{k}v$ where $v$ is the maximal suffix of
$w$ that does not begin with an $A$ (so $v$ either begins with
a $D$ or is empty). For example, if $w=AADADDD$, then $v=DADDD$. 

For the remainder of this section, whenever we write a nonempty word
$w\in\mathcal{W}_{n}$ as $w=A^{k}v$, the suffix $v$ is defined
as above.
\begin{lem}
\label{l-fixw}Let $n\geq1$. Then for any $w=A^{k}v\in\mathcal{W}_{n}$,
we have
\[
\fix(w)=k+\begin{cases}
1, & \text{if }v\in\widehat{\mathcal{W}}_{n-k}^{(f)}\text{ or }v\text{ is empty},\\
0, & \text{otherwise, if }v\in\mathcal{W}_{n-k}^{(f)}.
\end{cases}
\]
\end{lem}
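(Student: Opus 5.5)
Let $\pi=\theta_{213,231}(w)\in\mathfrak{S}_{n+1}(213,231)$, so that $\fix(w)=\fix(\pi)$ and $\ad(\pi)=w=A^{k}v$. The plan is to count the fixed points of $\pi$ in three groups: the positions $1,\dots,k$, the positions $k+1,\dots,n$, and the final position $n+1$. Throughout we apply Lemma \ref{l-213-231-fix} with $n+1$ in place of $n$.

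\emph{First group.} Positions $1,\dots,k$ are ascents of $\pi$ with $d_{i}(\pi)=0$. By Lemma \ref{l-213-231-fix}(a), all $k$ of them are fixed points. Every later ascent comes after the first $D$, so it has $d_{i}(\pi)\geq1$ and is not fixed.

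\emph{Empty $v$.} Then $k=n$ and $\pi=12\cdots(n+1)$, so $\fix(\pi)=n+1=k+1$, matching the formula. From now on assume $v$ is nonempty, so $v$ begins with $D$. By Lemma \ref{l-213-231-fix}(c), position $n+1$ is then not fixed, and the only remaining candidates are descents $i\geq k+1$.

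\emph{Remaining descents.} By Lemma \ref{l-213-231-fix}(b), a descent $i$ of $\pi$ is fixed iff $a_{i}(w)+2d_{i}(w)=n+2$. Write $i=k+i'$ with $i'\in[n-k]$. Then $d_{i}(w)=d_{i'}(v)$ and $a_{i}(w)=k+a_{i'}(v)$, so the condition becomes
\[
a_{i'}(v)+2d_{i'}(v)=(n-k)+2,
\]
with $v_{i'}=D$. This is exactly the defining condition of $\widehat{\mathcal{W}}_{n-k}^{(f)}$ versus $\mathcal{W}_{n-k}^{(f)}$, applied to $v$ of length $n-k$, since $v_{1}=D$.

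The one subtlety is the index $i'=1$, which the definitions exclude by requiring $2\leq i\leq n$. At $i'=1$ we have $a_{1}(v)+2d_{1}(v)=2$, and this equals $n-k+2$ only if $n=k$, contradicting $v$ nonempty. So excluding $i'=1$ loses nothing. By Lemma \ref{l-213-231-fix}(d), at most one such descent is fixed.

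Hence $\pi$ has exactly one extra fixed point if $v\in\widehat{\mathcal{W}}_{n-k}^{(f)}$, and none if $v\in\mathcal{W}_{n-k}^{(f)}$, which gives the formula. The main care needed is the index shift between $\pi$ and $v$ (the lemma is stated for permutations of length $n+1$, while the word has length $n$) and the $i'=1$ edge case; the rest is bookkeeping.
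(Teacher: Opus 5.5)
Your proof is correct and follows essentially the same route as the paper's: both count the first $k$ ascents as fixed points via Lemma~\ref{l-213-231-fix}(a), rule out position $n+1$ via (c) when $v$ is nonempty, and shift indices by $k$. That shift turns the fixed-descent criterion (b), with the uniqueness from (d), into the defining condition of $\widehat{\mathcal{W}}_{n-k}^{(f)}$ versus $\mathcal{W}_{n-k}^{(f)}$ for $v$. Your version is, if anything, slightly more careful: you use the correct constant $n+2$ where the paper's written proof has $n+1$ and $n-k+1$ (an off-by-one slip), and you explicitly check that the excluded index $i'=1$ cannot contribute, which the paper leaves implicit.
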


\begin{proof}
We have the following cases:
\begin{itemize}
\item \textbf{Case 1: }Suppose that there exists $i\in[n]$ such that $w_{i}=D\text{ and }a_{i}(w)+2d_{i}(w)=n+1$.
By Lemma \ref{l-213-231-fix}, the length $n+1$ permutation $\theta_{213,231}(w)$
has $k$ ascent fixed points and a descent fixed point. Moreover,
$\theta_{213,231}(w)$ cannot be the identity permutation $12\cdots(n+1)$
because it has a descent, so $n+1$ is not a fixed point. Hence, $\fix(w)=k+1$
and we wish to show that $v\in\widehat{\mathcal{W}}_{n-k}^{(f)}$.

We know that $v$ is nonempty because $w$ contains a $D$ and $A^{k}$
does not; thus, $v_{1}=D$. Moreover, since $v$ is obtained by removing
the first $k$ letters of $w$\textemdash all of which are $A$s\textemdash it
follows that $i>k$, $v_{i-k}=D$, and $a_{i-k}(v)+k+2d_{i-k}(v)=n+1$.
Rearranging this equation yields $a_{i-k}(v)+2d_{i-k}(v)=n-k+1$,
so we indeed have $v\in\widehat{\mathcal{W}}_{n-k}^{(f)}$.
\item \textbf{Case 2: }Suppose that no such $i$ exists. Again by Lemma
\ref{l-213-231-fix}, the permutation $\theta_{213,231}(w)$ has $k$
ascent fixed points and no descent fixed points. If $\theta_{213,231}(w)$
is the identity permutation, we have $\fix(w)=k+1$ and $v$ is empty.
Otherwise, we have $\fix(w)=k$ and we wish to show that $v\in\mathcal{W}_{n-k}^{(f)}$.

Suppose instead that $v\notin\mathcal{W}_{n-k}^{(f)}$, so there exists
$j\in[n-k]$ such that $v_{j}=D$ and $a_{j}(v)+2d_{j}(v)=n-k+1$.
Similar to before, this is equivalent to $w_{j+k}=D$ and $a_{j+k}(w)+2d_{j+k}(w)=n+1$,
which is a contradiction. Therefore, $v\in\mathcal{W}_{n-k}^{(f)}$
as desired.\qedhere
\end{itemize}
\end{proof}
We now give a $\pix$ analogue of the preceding lemma.
\begin{lem}
\label{l-pixw}Let $n\geq1$. Then for any $w=A^{k}v\in\mathcal{W}_{n}$,
we have
\[
\pix(w)=k+\begin{cases}
1, & \text{if }v\in\widehat{\mathcal{W}}_{n-k}^{(p)}\text{ or }v\text{ is empty},\\
0, & \text{otherwise, if }v\in\mathcal{W}_{n-k}^{(p)}.
\end{cases}
\]
\end{lem}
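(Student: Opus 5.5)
We will argue directly from the pixed factorization $\pi=\iota\delta$ of a permutation $\pi$ of length $n+1$ with $\ad(\pi)=w=A^{k}v$. This factorization depends only on $w$, since the pixed points are exactly the prefix $\iota$ for which the remaining suffix $\delta$ has its first non-descent in an even position (or is empty).

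The key observation is that $\pi_{1}<\pi_{2}<\cdots<\pi_{k+1}$ and, if $v$ is nonempty, $\pi_{k+1}>\pi_{k+2}$. So $\pi$ begins with an increasing run of length $k+1$, followed by a descent, when $v\neq\emptyset$. The pixed factorization splits $\pi$ after position $\pix(\pi)$. Since $\iota$ is increasing, $\pix(\pi)\le k+1$. Consider $\delta=\pi_{j+1}\cdots\pi_{n+1}$ with $j<k$. Its first non-descent is position $1$, because $\pi_{j+1}<\pi_{j+2}$ with $j+2\le k+1$, and $1$ is odd. Hence $\delta$ is not a desarrangement, and so $\pix(\pi)\in\{k,k+1\}$.

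Now we split into cases.
\begin{itemize}
\item \textbf{Case $v$ empty.} Then $\pi$ is increasing of length $n+1=k+1$, so $\iota=\pi$, $\delta$ is empty, and $\pix(w)=k+1$.
\item \textbf{Case $v$ nonempty.} Here $v_{1}=D$. The suffix $\delta'=\pi_{k+1}\cdots\pi_{n+1}$ has length $n-k+1$ and $AD$-word $v$.
  \begin{itemize}
  \item If $v\in\mathcal{W}_{n-k}^{(p)}$, then $\delta'$ is a desarrangement by the characterization stated right after the definition of $\mathcal{W}_{n}^{(p)}$. The uniqueness of the pixed factorization (Foata--Han) then gives $\pix(w)=k$.
  \item Otherwise $v\in\widehat{\mathcal{W}}_{n-k}^{(p)}$, so $v=Du$ with $u\in\mathcal{W}_{n-k-1}^{(p)}$. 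We must check that $\iota=\pi_{1}\cdots\pi_{k+1}$ is increasing, which holds, and that $\delta=\pi_{k+2}\cdots\pi_{n+1}$, whose $AD$-word is $u$, is a desarrangement, which holds by the same characterization. Then $\pix(w)=k+1$.
  \end{itemize}
\end{itemize}

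We need the dichotomy that a word beginning with $D$ lies in exactly one of $\mathcal{W}^{(p)}$ and $\widehat{\mathcal{W}}^{(p)}$. The paper notes that it lies in one of them. To see that it cannot lie in both, look at a $D$-word $v$ and the parity of its initial run of $D$s. Membership in $\mathcal{W}^{(p)}$ requires the run before the first $A$ to be odd, or all $D$s with odd length $n$. Membership in $\widehat{\mathcal{W}}^{(p)}$ shifts this parity by one. The two conditions are therefore exclusive. The edge case $v=D$ (length $1$) should be checked by hand: $D^{1}\in\mathcal{W}_{1}^{(p)}$ since $1$ is odd, so $\pix=k$, which matches the permutation $\pi_{k+1}>\pi_{k+2}$ being a desarrangement of length $2$.

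The main subtlety is making the uniqueness argument clean. If uniqueness of the pixed factorization is not taken as given, the fallback is to verify directly that a split at $k$ gives a desarrangement suffix exactly when $v\in\mathcal{W}^{(p)}$, while a split at $k+1$ does so exactly when $v\in\widehat{\mathcal{W}}^{(p)}$. The parity dichotomy above then ensures that exactly one of the two candidate splits works.
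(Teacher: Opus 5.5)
Your proof is correct and uses the same ingredients as the paper's. These are the pixed factorization, the reduction to $\pix(w)\in\{k,k+1\}$, and reading off the $AD$-word of $\delta$ as either $v$ or $v$ with its initial $D$ deleted, together with the characterization of $\mathcal{W}^{(p)}$ as $AD$-words of desarrangements. The only difference is direction: the paper starts from $j=\pix(w)$ and deduces which set $v$ lies in, leaving the disjointness of $\mathcal{W}^{(p)}_{n-k}$ and $\widehat{\mathcal{W}}^{(p)}_{n-k}$ implicit, whereas you start from the membership of $v$, exhibit the factorization, and invoke uniqueness. Uniqueness already forces the disjointness you verify by parity, since a word in both sets would yield two different factorizations.
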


\begin{proof}
Let $\pi$ be any permutation with $AD$-word $w$, and let $\pi=\iota\delta$
be its pixed factorization. Let $j=\pix(w)$, the length of $\iota$.
Then each index $1,2,\dots,j-1$ is an ascent of $\pi$, but $j$
may not necessarily be an ascent of $\pi$, and $j+1$ is a descent
of $\pi$ assuming that $\delta$ is nonempty. This means that either
$k=j-1$ or $k=j$. We consider the following cases:
\begin{itemize}
\item \textbf{Case 1:} Suppose that $j$ is an ascent of $\pi$. Then $\pix(w)=j=k$
and $v$ is the $AD$-word of $\delta$. Since $\delta$ is a desarrangement,
we have $v\in\mathcal{W}_{n-k}^{(p)}$.
\item \textbf{Case 2:} Suppose that $j$ is a descent of $\pi$. Then $\pix(w)=j=k+1$
and $v$ is the $AD$-word of $\iota_{j}\delta$ (the last letter
of $\iota$ prepended to the desarrangement $\delta$). Then $v=Du$
where $u$ is the $AD$-word of $\delta$. Again, $\delta$ is a desarrangement,
so $u\in\mathcal{W}_{n-k}^{(p)}$. Thus $v\in\widehat{\mathcal{W}}_{n-k}^{(p)}$. 
\item \textbf{Case 3:} If $j$ is neither an ascent nor descent of $\pi$,
then $\pix(w)=j=k+1$ and $v$ is empty.\qedhere
\end{itemize}
\end{proof}
Define $\Psi\colon\mathcal{W}_{n}\rightarrow\mathcal{W}_{n}$ by 
\[
\Psi(w)=\begin{cases}
A^{k}\psi(v), & \text{if }v\in\mathcal{W}_{n-k}^{(f)},\\
A^{k}\hat{\psi}(v), & \text{if }v\in\widehat{\mathcal{W}}_{n-k}^{(f)},\\
A^{k}, & \text{otherwise, if }v\text{ is empty},
\end{cases}
\]
for all $w=A^{k}v\in\mathcal{W}_{n}$. This is a bijection with inverse
\[
\Psi^{-1}(w)=\begin{cases}
A^{k}\psi^{-1}(v), & \text{if }v\in\mathcal{W}_{n-k}^{(p)},\\
A^{k}\hat{\psi}^{-1}(v), & \text{if }v\in\widehat{\mathcal{W}}_{n-k}^{(p)},\\
A^{k}, & \text{otherwise, if }v\text{ is empty.}
\end{cases}
\]
Lemmas \ref{l-fixw}\textendash \ref{l-pixw} guarantee that $\fix(w)=\pix(\Psi(w))$
for all $w\in\mathcal{W}_{n}$. And, as noted earlier, $\psi$ and
$\hat{\psi}$ both preserve the number of $D$s, so $\Psi$ does as
well. Thus
\[
\Phi_{213,231}\coloneqq\theta_{213,231}\circ\Psi\circ\ad
\]
satisfies $\fix(\pi)=\pix(\Phi_{213,231}(\pi))$ and $\des(\pi)=\des(\Phi_{213,231}(\pi))$
for all $\pi\in\mathfrak{S}_{n}(213,231)$, giving a bijective proof
of the $\Pi=\{213,231\}$ case of Theorem \ref{t-main}.
\begin{example}
\label{eg-213-231-bij}Take $\pi=1\,2\,11\,3\,10\,9\,4\,8\,5\,7\,6\in\mathfrak{S}_{11}(213,231)$,
which has 3 fixed points and 5 descents. We have 
\[
\ad(\pi)=AADADDADAD,
\]
and removing the initial run of $A$s yields $v=DADDADAD\in\widehat{\mathcal{W}}_{8}^{(f)}$.
In Example \ref{eg-hatbij}, we saw that 
\[
\hat{\psi}(v)=DDAADDDA\in\widehat{\mathcal{W}}_{8}^{(p)},
\]
so 
\[
\Psi(\ad(\pi))=AADDAADDDA.
\]
The unique permutation in $\mathfrak{S}_{11}(213,231)$ with this
$AD$-word is 
\[
\Phi_{213,231}(\pi)=\theta_{213,231}(AADDAADDDA)=1\,2\,11\,10\,3\,4\,9\,8\,7\,5\,6,
\]
which indeed has 3 pixed points and 5 descents.
\end{example}

\section{\label{s-others}Completing the proof of Theorem \ref{t-main}}

As of this point, we have verified Theorem \ref{t-main} only for
$\Pi=\{213,231\}$. We will now prove this theorem for all remaining
pattern sets $\Pi$ listed in its statement.

\subsection{The pattern set \texorpdfstring{$\Pi=\{132,312\}$}{Pi=\{132,312\}}}

Given a permutation $\pi=\pi_{1}\pi_{2}\cdots\pi_{n}$, the \textit{reverse}
of $\pi$ is defined to be $r(\pi)\coloneqq\pi_{n}\cdots\pi_{2}\pi_{1}$,
the \textit{complement} $c(\pi)$ of $\pi$ to be the permutation
obtained by (simultaneously) replacing the $i$th smallest letter
of $\pi$ with the $i$th largest letter of $\pi$ for all $i\in[n]$,
and the\textit{ reverse-complement} of $\pi$ to be $rc(\pi)\coloneqq c(r(\pi))=r(c(\pi))$.
For instance, if $\pi=162354$, then we have $r(\pi)=453261$, $c(\pi)=615423$,
and $rc(\pi)=324516$.

Reverse-complementation in particular has several useful properties
related to pattern avoidance and the permutation statistics under
consideration:
\begin{enumerate}
\item Given a pattern set $\Pi$, let $\Pi^{rc}\coloneqq\{\,rc(\pi):\pi\in\Pi\,\}$.
Then reverse-complementation restricts to a bijection from $\mathfrak{S}_{n}(\Pi)$
to $\mathfrak{S}_{n}(\Pi^{rc})$.
\item If $w=w_{1}w_{2}\cdots w_{n-1}$ is the $AD$-word of $\pi\in\mathfrak{S}_{n}$,
then $r(w)\coloneqq w_{n-1}\cdots w_{2}w_{1}$ is the $AD$-word of
the reverse-complement $rc(\pi)$. Hence, reverse-complementation
preserves the number of descents. 
\item Reverse-complementation is the same as conjugation by the decreasing
permutation $n\cdots21$, so it preserves cycle type and thus the
number of fixed points.
\end{enumerate}
We can exploit these properties, along with our earlier work for $\Pi=\{213,231\}$,
to prove our desired equidistribution for $\Pi^{rc}=\{132,312\}$.

Recall that $\ad$ restricts to a bijection between $\mathfrak{S}_{n}(213,231)$
and $\mathcal{W}_{n-1}$, so that every word in $\mathcal{W}_{n-1}$
is the $AD$-word of exactly one permutation in $\mathfrak{S}_{n}(213,231)$.
And since reverse-complementation induces a bijection on $AD$-words,
this means that $\ad$ also restricts to a bijection between $\mathfrak{S}_{n}(132,312)$
and $\mathcal{W}_{n-1}$. Let the inverse of this bijection be denoted
$\theta_{132,312}$.

It follows from the above discussion, along with properties of $\Psi$
established earlier, that
\[
\Phi_{132,312}\coloneqq\theta_{132,312}\circ\Psi\circ\ad\circ\,rc
\]
is a bijection on $\mathfrak{S}_{n}(132,312)$ satisfying $\fix(\pi)=\pix(\Phi_{132,312}(\pi))$
and $\des(\pi)=\des(\Phi_{132,312}(\pi))$ for all $\pi\in\mathfrak{S}_{n}(132,312)$.
Thus we have proven Theorem \ref{t-main} for $\Pi=\{132,312\}$.
\begin{example}
Take $\pi=6\,5\,7\,4\,8\,3\,2\,9\,1\,10\,11\in\mathfrak{S}_{11}(132,312)$,
which has 3 fixed points and 5 descents. The reverse-complement of
$\pi$ is 
\[
rc(\pi)=1\,2\,11\,3\,10\,9\,4\,8\,5\,7\,6\in\mathfrak{S}_{11}(213,231),
\]
and in Example \ref{eg-213-231-bij}, we saw that 
\[
\Psi(\ad(1\,2\,11\,3\,10\,9\,4\,8\,5\,7\,6))=AADDAADDDA.
\]
Then 
\[
\Phi_{132,312}(\pi)=\theta_{132,312}(AADDAADDDA)=6\,7\,8\,5\,4\,9\,10\,3\,2\,1\,11
\]
is a permutation in $\mathfrak{S}_{11}(132,312)$ with 3 pixed points
and 5 descents, as expected.
\end{example}

\subsection{The pattern set \texorpdfstring{$\Pi=\{132,321\}$}{Pi=\{132,321\}}}

Next, we tackle the case $\Pi=\{132,321\}$, which will not require
the bijection $\Psi$.

An \textit{interval} of a permutation $\pi$ is a consecutive subsequence
of $\pi$ consisting of consecutive integer letters. For example,
$234$ is an increasing interval of $234165$, whereas $2341$ is
a non-increasing interval. The permutations in $\mathfrak{S}_{n}(132,321)$
can be characterized as concatenations of the increasing intervals
$x(x+1)\cdots y$, $12\cdots(x-1)$, and $(y+1)(y+2)\cdots n$ in
which $1\leq x\leq y\leq n$.
\begin{lem}
\label{l-132-321}Let $n\geq1$. Then 
\[
\mathfrak{S}_{n}(132,321)=\{\,x(x+1)\cdots y12\cdots(x-1)(y+1)(y+2)\cdots n:1\leq x\leq y\leq n\,\}.
\]
\end{lem}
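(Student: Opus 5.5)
We prove the two inclusions separately, writing $\sigma_{x,y}$ for the permutation $x(x+1)\cdots y\,12\cdots(x-1)\,(y+1)(y+2)\cdots n$.

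\textbf{Every $\sigma_{x,y}$ avoids $132$ and $321$.} The word $\sigma_{x,y}$ is the concatenation of three increasing blocks $B_1=x\cdots y$, $B_2=1\cdots(x-1)$, and $B_3=(y+1)\cdots n$. Every letter of $B_2$ is smaller than every letter of $B_1$, and every letter of $B_3$ is larger than all letters of $B_1$ and $B_2$.

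An occurrence of $321$ needs a decreasing subsequence of length three. Each block is increasing, so the three letters lie in distinct blocks, in order $B_1,B_2,B_3$. But a letter of $B_3$ exceeds everything before it, so such a subsequence cannot be decreasing.

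For $132$, suppose $abc$ is an occurrence, so $a<c<b$. The letter $c$ cannot lie in $B_3$, because then $c>b$. Since $b$ comes before $c$ and $b>c$, the letters $b$ and $c$ lie in different blocks, which forces $b\in B_1$ and $c\in B_2$. Then $a$ precedes $b$, so $a\in B_1$, and hence $a>c$, a contradiction.

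\textbf{Every $\pi\in\mathfrak{S}_{n}(132,321)$ equals some $\sigma_{x,y}$.} Let $x=\pi_1$.

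First, the letters larger than $x$ appear in increasing order. Otherwise there are $b>c>x$ with $b$ before $c$, and then $x\,b\,c$ is an occurrence of $132$.

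Second, the letters smaller than $x$ appear in increasing order. Otherwise $x$ followed by a descending pair below $x$ is an occurrence of $321$.

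So $\pi$ is a shuffle of the increasing sequences $(x+1)\cdots n$ and $1\cdots(x-1)$, preceded by $x$. If $x=1$, then $\pi$ is the identity, which is $\sigma_{1,y}$ for any $y$ (for instance $y=n$). Otherwise, let $y$ be the last letter before the position of $1$, and note the following.
\begin{itemize}
\item Every letter before $1$ is at least $x$, since the small letters appear in increasing order and $1$ is the smallest. These letters form an increasing run of large letters beginning at $x$, so they are exactly $x(x+1)\cdots y$.
\item After $1$, no letter larger than $y$ can appear before the letter $x-1$. If some $z>y$ preceded a small letter $s\ge 2$ with $s$ after $z$, then $1\,z\,s$ would be an occurrence of $132$.
\end{itemize}
Hence after $x\cdots y$ come $1,2,\dots,x-1$ in order, followed by $(y+1)\cdots n$, and $\pi=\sigma_{x,y}$.

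The main care point is this last step, pinning down the block structure after the first small letter; the $132$ argument with $1$ as the first letter of the occurrence handles it.
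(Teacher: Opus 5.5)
Your proof is correct, and it takes a somewhat different route from the paper's. The paper cuts $\pi$ at the position of the letter $1$. Writing $\pi=\tau1\rho$, avoiding $321$ forces $\tau$ to be increasing, and avoiding $132$ (with $1$ as the smallest letter of the occurrence) forces $\rho$ to be increasing. A $132$ built from two letters of $\tau$ and one letter of $\rho$ then shows that $\tau$ is an interval $x(x+1)\cdots y$. Since $\rho$ is just the remaining letters in increasing order, the form follows at once.

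You instead split the \emph{values} around $\pi_1=x$. You get the shuffle structure from patterns anchored at $x$, and only afterwards use the letter $1$ to rule out large letters interleaving with small ones. Your observation that the large letters appear in increasing order plays the role of the paper's interval step. In your first bullet it is this global order that makes the letters before $1$ consecutive: those large letters form an initial segment of $(x+1)\cdots n$. An increasing run starting at $x$ alone would not give this, so it is worth saying explicitly.

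The paper's cut is more economical. The single fact that $\rho$ is increasing settles both the order of the small letters and their placement before $(y+1)\cdots n$. In your argument this takes the shuffle claim plus your second bullet.

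On the other hand, you also prove the reverse inclusion: each $x(x+1)\cdots y\,12\cdots(x-1)\,(y+1)\cdots n$ avoids $132$ and $321$. The paper's proof leaves this unstated. Your three-block argument is clean and makes the set equality fully proved.
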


Note that $x(x+1)\cdots y12\cdots(x-1)(y+1)(y+2)\cdots n$ is the
identity permutation $12\cdots n$ when $x=1$, and may not actually
end with the letter $n$ (consider the case $y=n$).
\begin{proof}
Let $\pi\in\mathfrak{S}_{n}(132,321)$. Then $\pi=\tau1\rho$ where
$\tau$ and $\rho$ are both increasing permutations. If $\tau$ is
empty, then $\pi=12\cdots n$. If $\tau$ is nonempty, then it must
be an interval; otherwise, $\pi$ would contain an occurrence of $132$
consisting of two letters of $\tau$ and one letter of $\rho$. Thus,
$\pi$ is of the desired form.
\end{proof}
\begin{lem}
\label{l-132-321-fixpix}Let $n\geq2$ and $0\leq k\leq n-2$. Then\textup{:}
\begin{enumerate}
\item [\normalfont{(a)}]The permutations in $\mathfrak{S}_{n}(132,321)$
with exactly $k$ fixed points are precisely those of the form 
\[
x(x+1)\cdots(n-k)12\cdots(x-1)(n-k+1)(n-k+2)\cdots n
\]
where $2\leq x\leq n-k$, all of which have exactly 1 descent.
\item [\normalfont{(b)}]The permutations in $\mathfrak{S}_{n}(132,321)$
with exactly $k$ pixed points are precisely those of the form 
\[
x(x+1)\cdots(x+k)12\cdots(x-1)(x+k+1)(x+k+2)\cdots n
\]
where $2\leq x\leq n-k$, all of which have exactly 1 descent.
\end{enumerate}
\end{lem}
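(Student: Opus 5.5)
We work directly from the parametrization in Lemma \ref{l-132-321}. Write $\pi=\pi(x,y)=x(x+1)\cdots y\,12\cdots(x-1)\,(y+1)\cdots n$ with $1\leq x\leq y\leq n$. The case $x=1$ gives the identity, which has $n$ fixed points and $n$ pixed points. Since $k\leq n-2$, the identity is excluded from both (a) and (b). So we assume $x\geq2$.

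For $x\geq 2$, the first block $x\cdots y$ has length $y-x+1$ and is followed by $1$. Hence there is exactly one descent, at position $y-x+1$, and the rest of $\pi$ is increasing. This settles the descent claims in both parts.

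For part (a), we locate the fixed points.
\begin{itemize}
\item In the first block, position $i\leq y-x+1$ carries the letter $x+i-1>i$, so there are no fixed points there.
\item In the middle block, position $y-x+1+j$ carries the letter $j$ for $1\leq j\leq x-1$. Since $y\geq x$, we have $j<y-x+1+j$, so again there are no fixed points.
\item In the final block, position $y+m$ carries the letter $y+m$, so all $n-y$ of these positions are fixed points.
\end{itemize}
Thus $\fix(\pi)=n-y$. Having exactly $k$ fixed points is therefore equivalent to $y=n-k$, and the constraint $2\leq x\leq y$ becomes $2\leq x\leq n-k$. This gives exactly the stated form.

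For part (b), we compute the pixed factorization. Since $\pi$ has a single descent at position $d=y-x+1$, the prefix $\pi_1\cdots\pi_d$ is increasing. We claim that $\pix(\pi)=d-1$. The suffix $\pi_d\pi_{d+1}\cdots\pi_n=y\,1\,2\cdots$ has its first ascent at position $2$, or is $y1$ when $x=2$ and $y=n$. In either case its first non-descent is at position $2$, which is even, so it is a desarrangement. The complementary prefix $\pi_1\cdots\pi_{d-1}$ is increasing. By uniqueness of the pixed factorization (Foata--Han), $\pix(\pi)=d-1=y-x$.

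Setting $y-x=k$, i.e.\ $y=x+k$, the condition $y\leq n$ becomes $x\leq n-k$. Combined with $x\geq2$, this is exactly the stated form $x(x+1)\cdots(x+k)\,12\cdots(x-1)\,(x+k+1)\cdots n$ with $2\leq x\leq n-k$.

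The only delicate point is the pixed-factorization computation. One must check that the suffix $\delta$ really is a desarrangement in the edge cases: when $x=2$, the middle block is just the letter $1$, and when $y=n$, there is no final block. In both cases the first non-descent of $\delta$ is still at position $2$, either as an ascent or as the terminal position of $y1$. The remaining steps are routine bookkeeping.
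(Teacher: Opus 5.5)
Your proof is correct and takes the same approach as the paper. You parametrize via Lemma \ref{l-132-321}, discard the identity using $k\le n-2$, and note the single descent at position $y-x+1$. You then identify the fixed points with the final block $(y+1)\cdots n$ and the pixed points with the first block minus its last letter $y$. You also supply details the paper leaves implicit: the position-by-position check that the first two blocks contain no fixed points, and the verification (including the edge cases $x=2$ and $y=n$) that $y\,1\cdots$ is a desarrangement, so that uniqueness of the pixed factorization gives $\pix(\pi)=y-x$.
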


\begin{proof}
Let $\pi\in\mathfrak{S}_{n}(132,321)$. By Lemma \ref{l-132-321},
we have that 
\[
\pi=x(x+1)\cdots y12\cdots(x-1)(y+1)(y+2)\cdots n
\]
 for some $1\leq x\leq y\leq n$. Assume that $x\neq1$, so that $\pi$
is not the identity permutation. Observe that $\pi$ has exactly 1
descent, which occurs at the end of the increasing interval $x(x+1)\cdots(x+k)$.

The fixed points of $\pi$ are precisely the letters in the increasing
interval $(y+1)(y+2)\cdots n$. So for $\fix(\pi)=k$, this interval
must be of length $k$, which forces $y=n-k$. Similarly, the pixed
points of $\pi$ are precisely the letters in the increasing interval
$x(x+1)\cdots y$ except for the last letter $y$, so $\pix(\pi)=k$
would force $y=x+k$.
\end{proof}
Lemma \ref{l-132-321-fixpix} tells us that, given $0\leq k\leq n-2$
and $2\leq x\leq n-k$, there is a unique permutation in $\mathfrak{S}_{n}(132,321)$
with $k$ fixed points and first letter $x$, as well as a unique
permutation $\mathfrak{S}_{n}(132,321)$ with $k$ pixed points and
first letter $x$. All of these permutations have a single descent.
The only other permutation in $\mathfrak{S}_{n}(132,321)$ is the
identity permutation $12\cdots n$, which has $k$ fixed points and
$k$ pixed points. Hence, we define
\[
\Phi_{132,312}\colon\mathfrak{S}_{n}(132,321)\rightarrow\mathfrak{S}_{n}(132,321)
\]
by taking $\Phi_{132,312}(\pi)$ to be the unique permutation in $\mathfrak{S}_{n}(132,321)$
whose number of pixed points is equal to the number of fixed points
of $\pi$ and with the same first letter as $\pi$. This immediately
yields Theorem \ref{t-main} for $\Pi=\{132,312\}$.

\subsection{Tripleton pattern sets}

Among the $\Pi$ listed in Theorem \ref{t-main}, only those of size
3 remain. As we shall demonstrate, our bijections for all but one
of these are obtained by restricting our bijections $\Phi_{213,231}$
and $\Phi_{132,312}$ appropriately.
\begin{lem}
\label{l-213-231-restr}Let $n\geq1$, and suppose that $\pi\in\mathfrak{S}_{n}(213,231)$.
Then\textup{:}
\begin{enumerate}
\item [\normalfont{(a)}] $\pi$ avoids $123$ if and only if $\asc(\pi)\leq1$,
\item [\normalfont{(b)}] $\pi$ avoids $321$ if and only if $\des(\pi)\leq1$,
and
\item [\normalfont{(c)}] $\pi$ avoids $312$ if and only if $\ad(\pi)=A^{k}D^{n-k-1}$
where $k=\asc(\pi)$.
\end{enumerate}
\end{lem}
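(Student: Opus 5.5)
The plan is to use the structural description from Lemma \ref{l-213-231-ad}. Write $k=\asc(\pi)$. Then $\pi$ is a shuffle of the increasing sequence $12\cdots k$, whose letters sit at the ascent positions, with the decreasing sequence $n(n-1)\cdots(k+2)$, whose letters sit at the descent positions, followed by the final letter $k+1$. Each part of the lemma then reduces to reading off which subsequences of length $3$ can occur in such a shuffle.

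For (a), suppose first that $\asc(\pi)\geq2$. Lemma \ref{l-213-231-ad}(a) places the letters $1$ and $2$ at the first two ascents, in that order, and the last letter is $k+1\geq3$. So $1\,2\,(k+1)$ is an occurrence of $123$. Conversely, suppose $\pi$ contains an occurrence $abc$ of $123$. Then $a<b$ with $a$ appearing before $b$. I will argue that $a$ and $b$ must both be ascent letters. A descent letter is larger than every later letter except possibly none: the descent letters decrease, and each exceeds $k+1$ and every ascent letter. So a descent letter cannot serve as $a$. If $a$ is an ascent letter and $b$ a descent letter, then $c$ lies after $b$, so $c<b$, which is impossible. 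The remaining possibility is $b=k+1$ at the end, but then $c$ has no room. Hence $a$ and $b$ are distinct ascent letters, which forces $k\geq2$.

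Part (b) is the symmetric argument, using Lemma \ref{l-213-231-ad}(b). If there are at least two descents, the letters $n$ and $n-1$ sit at the first two descents, in that order, and the last letter $k+1$ is smaller than both. This gives an occurrence of $321$. Conversely, in an occurrence $abc$ of $321$, the same case analysis shows $a$ and $b$ must both be descent letters, so $\des(\pi)\geq2$.

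For (c), the shuffle form shows that a $312$ occurrence $cab$ requires a large letter $c$ followed later by $a<b$ with $a$ before $b$. Suppose some $D$ precedes some $A$ in $\ad(\pi)$. Take a descent letter $c$ and an ascent letter $a$ occurring after it; then $c\,a\,(k+1)$ is a $312$ occurrence, since $a\leq k<k+1<c$. Conversely, suppose the word has the form $A^{k}D^{n-k-1}$. Then $\pi=12\cdots k\,n(n-1)\cdots(k+1)$, which is increasing followed by decreasing, and so clearly avoids $312$. Note that $\des(\pi)=n-1-k$ here, so the word length matches.

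The main obstacle is the converse directions of (a) and (b). There one must carefully exclude the final letter $k+1$ and the mixed ascent/descent cases. I would organize this by the observation that every letter at a descent position exceeds everything after it, while every letter at an ascent position is smaller than everything after it.
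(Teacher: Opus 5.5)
Your proof is correct. It rests on the same input as the paper's, namely the shuffle structure from Lemma~\ref{l-213-231-ad}. Your forward directions in (a) and (b) are essentially the paper's. The only change is that you complete the occurrence with the final letter $k+1$, where the paper uses $\pi_{j+1}$ for the second ascent $j$.

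The real differences are in the remaining directions. For the converse of (a), the paper lists the permutations with at most one ascent as $n\cdots21$ and $1n\cdots32$ and checks them directly. That list is incomplete, since the single ascent need not be at position $1$: for example, $4132$ and $4312$ lie in $\mathfrak{S}_{4}(213,231)$ with one ascent. The paper's conclusion still holds, because any such $\pi$ is just the letter $1$ inserted into the decreasing word $n\cdots32$.

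Your argument avoids this pitfall and treats (a) and (b) uniformly. Every descent letter exceeds everything after it, and every ascent letter is below everything after it. This forces the first two letters of any $123$ (resp.\ $321$) occurrence to be ascent (resp.\ descent) letters. You should fix the garbled sentence ``larger than every later letter except possibly none''; your last paragraph states the claim correctly.

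For (c), the paper takes a factor $DA$ at positions $i,i+1$ and notes that $\pi_i\pi_{i+1}\pi_{i+2}$ is an occurrence of $213$ or $312$, hence of $312$. Your witness (a descent letter, a later ascent letter, and the final letter $k+1$) is equally short and reads the occurrence directly off the shuffle form.
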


\begin{proof}
Assume that $\asc(\pi)\geq2$, and let $i<j$ be the two smallest
ascents of $\pi$. Then $\pi_{i}=1$ and $\pi_{j}=2$ by Lemma \ref{l-213-231-ad},
so $\pi_{i}\pi_{j}\pi_{j+1}$ is an occurrence of $123$ in $\pi$.
Conversely, suppose that $\asc(\pi)\leq1$. Then Lemma \ref{l-213-231-ad}
implies that either $\pi=n\cdots21$ (if $\pi$ has no ascents) or
$\pi=1n\cdots32$ (if $\pi$ has exactly 1 ascent), both of which
avoid $123$. Thus, (a) follows.

The proof of (b) is omitted due to its similarity to that of (a).

For (c), suppose to the contrary that $\pi$ avoids $312$ but that
$\ad(\pi)$ is not of the stated form. Then $\pi$ has a descent $i$
followed immediately by an ascent $i+1$. Consequently, $\pi_{i}\pi_{i+1}\pi_{i+2}$
must be an occurrence of either $213$ or $312$, a contradiction.
Conversely, if $\ad(\pi)=A^{k}D^{n-k-1}$ where $k=\asc(\pi)$, then
it is immediate that $\pi$ avoids $312$.
\end{proof}
We are now ready to prove the equidistribution of $(\fix,\des)$ and
$(\pix,\des)$ over the avoidance classes $\mathfrak{S}_{n}(123,213,231)$,
$\mathfrak{S}_{n}(213,231,312)$, and $\mathfrak{S}_{n}(213,231,321)$.
\begin{proof}[Proof of Theorem \ref{t-main} for $\{123,213,231\}$, $\{213,231,312\}$,
$\{213,231,321\}$.]
We already know that $\Phi_{213,231}$ sends $(\fix,\des)$ to $(\pix,\des)$,
and hence preserves the number of ascents as well. Thus, by Lemma
\ref{l-213-231-restr} (a), $\Phi_{213,231}$ restricts to a bijection
on $\mathfrak{S}_{n}(123,213,231)$, which proves the equidistribution
for this avoidance class. The analogous result for $\mathfrak{S}_{n}(213,231,321)$
follows by the same reasoning, using Lemma \ref{l-213-231-restr}
(b) instead.

Now, let $\pi\in\mathfrak{S}_{n}(213,231,312)$. By Lemma \ref{l-213-231-restr}
(c), $\ad(\pi)=A^{k}D^{n-k-1}$ where $k=\asc(\pi)$. Then $\Phi(A^{k}D^{n-k-1})$
is either $A^{k}$, $A^{k}\phi(D^{n-k-1})$, or $A^{k}\hat{\phi}(D^{n-k-1})$,
depending on the parity of $n-k-1$ and whether $n=k+1$. But we have
$\Psi(A^{k}D^{n-k-1})=A^{k}D^{n-k-1}$ in all of these cases; this
can be seen directly from the definitions of $\phi$ and $\hat{\phi}$,
or from the fact that $\Phi$ preserves the number of $D$s. Consequently,
$\Phi_{213,231}$ restricts to a bijection (in fact, the identity
map\footnote{This actually implies the stronger result that $\fix(\pi)=\pix(\pi)$
for all $\pi\in\mathfrak{S}_{n}(213,231,312)$.}) on $\mathfrak{S}_{n}(213,231,312)$, proving the desired equidistribution
for this avoidance class as well.
\end{proof}
The next lemma is a straightforward consequence of Lemma \ref{l-213-231-restr}
(a)\textendash (b) and the fact that reverse-complementation preserves
$\des$ (and thus $\asc$).
\begin{lem}
\label{l-132-312-restr}Let $n\geq1$, and suppose that $\pi\in\mathfrak{S}_{n}(132,312)$.
Then\textup{:}
\begin{enumerate}
\item [\normalfont{(a)}] $\pi$ avoids $123$ if and only if $\asc(\pi)\leq1$,
and
\item [\normalfont{(b)}] $\pi$ avoids $321$ if and only if $\des(\pi)\leq1$.
\end{enumerate}
\end{lem}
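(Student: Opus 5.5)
The plan is to transfer Lemma \ref{l-213-231-restr} (a)--(b) across reverse-complementation rather than argue directly. Let $\pi\in\mathfrak{S}_{n}(132,312)$ and put $\sigma\coloneqq rc(\pi)$. Since $rc(132)=213$ and $rc(312)=231$, property 1 of reverse-complementation (listed at the start of this section) gives $\sigma\in\mathfrak{S}_{n}(213,231)$, so Lemma \ref{l-213-231-restr} applies to $\sigma$.

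For (a), note that $rc(123)=123$. Because $rc$ is an involution that maps occurrences of a pattern $\tau$ bijectively to occurrences of $rc(\tau)$, $\pi$ avoids $123$ if and only if $\sigma$ avoids $123$. By Lemma \ref{l-213-231-restr}(a), this holds if and only if $\asc(\sigma)\leq1$. By property 2, the $AD$-word of $\sigma$ is the reversal of $\ad(\pi)$, so $\asc(\sigma)=\asc(\pi)$ (equivalently, $\des$ is preserved and $\asc=n-1-\des$). Hence $\pi$ avoids $123$ if and only if $\asc(\pi)\leq1$.

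For (b), the argument is identical, using $rc(321)=321$, Lemma \ref{l-213-231-restr}(b), and the fact that $\des(\sigma)=\des(\pi)$.

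No step is a real obstacle. The only point requiring care is the small bookkeeping check that $\{123\}$ and $\{321\}$ are fixed by $rc$ while $\{132,312\}$ is sent to $\{213,231\}$, together with the justification that avoidance of a single pattern transfers under $rc$ exactly as avoidance of a pattern set does.
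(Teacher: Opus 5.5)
Your proposal is correct and matches the paper's own argument: the paper also derives this lemma directly from Lemma \ref{l-213-231-restr} (a)--(b) by applying reverse-complementation, using that $rc$ sends $\{132,312\}$ to $\{213,231\}$, fixes $123$ and $321$, and preserves $\des$ and hence $\asc$. Your pattern computations $rc(132)=213$, $rc(312)=231$, $rc(123)=123$ and $rc(321)=321$ are all correct.
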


Lemma \ref{l-132-312-restr} then implies that $\Phi_{132,312}$ restricts
to a bijection on both $\mathfrak{S}_{n}(123,132,312)$ and $\mathfrak{S}_{n}(132,312,321)$.
Hence, we have verified Theorem \ref{t-main} for $\Pi=\{123,132,312\}$
and $\Pi=\{132,312,321\}$.

In analogy to Lemma \ref{l-213-231-restr} (c), $\pi\in\mathfrak{S}_{n}(132,312)$
avoids $231$ if and only if $\ad(\pi)=D^{n-k-1}A^{k}$ where $k=\asc(\pi)$.
But, for such a $\pi$, we have
\[
(\Psi\circ\ad\circ\,rc)(\pi)=(\Psi\circ r\circ\ad)(\pi)=\Psi(A^{k}D^{n-k-1})=A^{k}D^{n-k-1},
\]
which is not in the appropriate form for the corresponding permutation
to avoid $231$. Therefore, this does not lead to an analogous equidistribution
over $\mathfrak{S}_{n}(132,231,312)$, and in fact the statistics
$(\fix,\des)$ and $(\pix,\des)$ have different distributions over
this avoidance class.

Lastly, we know from the Erd\H{o}s\textendash Szekeres theorem \cite{Erdoes1935}
that $\left|\mathfrak{S}_{n}(123,312,321)\right|=0$ for all $n\geq5$.
Then the map defined by
\[
1\mapsto1,\quad12\mapsto12,\quad21\mapsto21,\quad132\mapsto132,\quad213\mapsto231,\quad231\mapsto213,\quad2143\mapsto2143
\]
is a bijection on $\mathfrak{S}_{n}(123,312,321)$ which sends the
number of fixed points to the number of pixed points while preserving
the number of descents. This completes the proof of Theorem~\ref{t-main}.

\section{\label{s-132}From \texorpdfstring{$132$}{132}-avoiding derangements
to \texorpdfstring{$132$}{132}-avoiding desarrangements}

Our last aim is to give a bijective proof for the fact that $\left|\mathcal{D}_{n}(132)\right|=\left|\mathfrak{D}_{n}(132)\right|$
for all $n\geq0$, in order to complete a bijective proof of Theorem
\ref{t-derdes}. We will accomplish this task by composing three bijections,
all of which involve Dyck paths. Let us start by introducing two of
these bijections, one due to Krattenthaler \cite{Krattenthaler2001}
and the other to Elizalde and Deutsch \cite{Elizalde2003}, before
constructing the third.

\subsection{Dyck paths and two known bijections}

Recall that a \textit{Dyck path} of semilength $n$ is a lattice path
in $\mathbb{Z}^{2}$ between $(0,0)$ and $(2n,0)$ consisting of
\textit{up steps} $(1,1)$ and\textit{ down steps} $(1,-1)$ which
never traverses below the $x$-axis. Let $\mathcal{P}_{n}$ denote
the set of Dyck paths of semilength $n$. Dyck paths are often represented
as words over the alphabet $\{U,D\}$ where $U$ refers to an up step
and $D$ a down step.

Following Elizalde and Pak \cite{Elizalde2004}, a \textit{tunnel}
of $\mu\in\mathcal{P}_{n}$ is a horizontal segment between two lattice
points of $\mu$ which intersects $\mu$ only in those two points
and which stays below $\mu$. Additionally, a tunnel of $\mu\in\mathcal{P}_{n}$
is called \textit{centered} if the $x$-coordinate of its midpoint
is $n$; in other words, it is centered with respect to the vertical
line through the middle of the path $\mu$. A \textit{hill} is an
up step whose initial lattice point is on the $x$-axis, followed
by a down step. See Figure \ref{f-dyck} for an example of a Dyck
path with 1 centered tunnel and 3 hills.

\begin{figure}
\begin{center}
\begin{tikzpicture}[scale=0.75]
\draw[pathcolorlight] (5,1) -- (11,1);
\drawlinedots{4,5,6,7,8,9,10,11,12,13,14}{0,1,2,3,2,3,2,1,2,1,0};
\drawlinedotsred{0,1,2,3,4}{0,1,0,1,0};
\drawlinedotsred{14,15,16}{0,1,0};
\end{tikzpicture}
\end{center}
\vspace{-10bp}\caption{\label{f-dyck}The Dyck path $UDUDUUUDUDDUDDUD$, which has one centered
tunnel and three hills. The centered tunnel is illustrated with a
dotted blue line, while the hills are highlighted in red.}
\end{figure}

Both $\mathfrak{S}_{n}(132)$ and $\mathcal{P}_{n}$ are counted by
the Catalan numbers, and there are numerous known bijections between
them. One such bijection, which we shall denote $\kappa$, is due
to Krattenthaler~\cite{Krattenthaler2001}. While Krattenthaler's
definition of $\kappa$ is iterative, we will present an equivalent
recursive description of this bijection (see \cite[Section 3.1]{Claesson2008/09}).
First, $\kappa$ sends the empty permutation to the empty path. Now,
notice that any $\pi\in\mathfrak{S}_{n}(132)$ where $n\geq1$ can
be uniquely decomposed as $\pi=\tau n\rho$, where $\tau$ and $\rho$
are $132$-avoiding permutations and where every letter of $\tau$
is larger than every letter of $\rho$. Then $\kappa(\pi)$ is defined
to be the concatenation
\[
\kappa(\pi)\coloneqq U\kappa(\std(\tau))D\kappa(\rho),
\]
where we recall that $\std(\tau)$ is the standardization of $\tau$.
\begin{example}
\label{eg-krattenthaler}Take $\pi=987645213\in\mathfrak{S}_{9}(132)$.
Then {\allowdisplaybreaks
\begin{align*}
\kappa(\pi) & =UD\kappa(87645213)\\
 & =UDUD\kappa(7645213)\\
 & =UDUDUD\kappa(645213)\\
 & =UDUDUDUD\kappa(45213)\\
 & =UDUDUDUDU\kappa(1)D\kappa(213)\\
 & =UDUDUDUDUUDDU\kappa(21)D\\
 & =UDUDUDUDUUDDUUD\kappa(1)D\\
 & =UDUDUDUDUUDDUUDUDD.
\end{align*}
}See the top part of Figure \ref{f-132dertodes} for a visualization
of this example.
\end{example}

Observe that, in the above example, $\pi$ is a derangement and $\kappa(\pi)$
has no central tunnels. Indeed, it was proven by Elizalde and Pak
\cite[Proposition 4]{Elizalde2004} that, for all $\pi\in\mathfrak{S}_{n}(132)$,
the number of fixed points of $\pi$ is equal to the number of centered
tunnels of $\kappa(\pi)$.\footnote{Actually, Elizalde and Pak worked with a variant of Krattenthaler's
bijection in which the output path $\kappa(\pi)$ is reflected about
the vertical line through its middle, but this reflection does not
change the number of centered tunnels.} Hence, $\kappa$ restricts to a bijection from $\mathcal{D}_{n}(132)$
to the set $\mathcal{P}_{n}^{(t)}$ of Dyck paths of semilength $n$
with no central tunnels.

Next, we describe a bijection $\chi\colon\mathcal{P}_{n}\rightarrow\mathcal{P}_{n}$
due to Elizalde and Deutsch \cite{Elizalde2003}. Fix $\mu\in\mathcal{P}_{n}$,
and let $\sigma\in\mathfrak{S}_{2n}$ be the permutation whose letters
are
\[
\sigma_{i}=\begin{cases}
\frac{i+1}{2}, & \text{if }i\text{ is odd},\\
2n+1-\frac{i}{2}, & \text{otherwise, if }i\text{ is even.}
\end{cases}
\]
(So $\sigma_{1}=1$, $\sigma_{2}=2n$, $\sigma_{3}=2$, $\sigma_{4}=2n-1$,
and so on.) Observe that every up step of $\mu$ has a corresponding
down step with which it determines a tunnel; match every up step with
its corresponding down step. We iterate through all $i$ from $1$
to $n$; at each iteration, we ``read'' the $\sigma_{i}$th step of
$\mu$ (so that the steps of $\mu$ are read in an alternating manner)
and define the $i$th step of $\chi(\mu)$ based on whether the step
matched with the $\sigma_{i}$th step of $\mu$ has already been read.
If this matching step has not yet been read, then define the $i$th
step of $\chi(\mu)$ to be an up step; otherwise, if it has been read,
then define the $i$th step of $\chi(\mu)$ to be a down step.
\begin{example}
\label{eg-ElizaldePak}Continuing Example \ref{eg-krattenthaler},
take $\mu=\kappa(\pi)=UDUDUDUDUUDDUUDUDD$.
\begin{enumerate}
\item The first step of $\mu$ is matched with the second step of $\mu$,
which has not been read. So, the first step of $\chi(\mu)$ is a $U$.
\item The last step of $\mu$ is matched with the 13th step of $\mu$, which
has not been read. So, the second step of $\chi(\mu)$ is a $U$.
\item The second step of $\mu$ is matched with the first step of $\mu$,
which has been read. So, the third step of $\chi(\mu)$ is a $D$.
\item The penultimate step of $\mu$ is matched with the antepenultimate
step of $\mu$, which has not been read. So, the fourth step of $\chi(\mu)$
is an $U$.
\end{enumerate}
Continuing in this manner, we have $\chi(\mu)=UUDUUDDUUDDDUUDUDD$.
See the middle part of Figure \ref{f-132dertodes}.
\end{example}

In the above example, $\mu$ has no central tunnels and $\chi(\mu)$
has no hills. Elizalde and Deutsch proved that, for all $\mu\in\mathcal{P}_{n}$,
the number of central tunnels of $\mu$ is equal to the number of
hills of $\chi(\mu)$, so $\chi$ restricts to a bijection from $\mathcal{P}_{n}^{(t)}$
to the set $\mathcal{P}_{n}^{(h)}$ of Dyck paths of semilength $n$
with no hills.

\subsection{From \texorpdfstring{$132$}{132}-avoiding desarrangements to Dyck
paths with no hills}

Lastly, we define a modification of Krattenthaler's bijection $\kappa$
which will provide a bijection from $132$-avoiding desarrangements
to Dyck paths with no hills. As with $\kappa$, our modified bijection
$\hat{\kappa}$ is defined recursively and sends the empty permutation
to the empty path. Given a nonempty $\pi\in\mathfrak{D}_{n}(132)$,
we write $\pi=\tau n\rho$. Note that $\tau$ is a desarrangement
whenever $\pi$ is a desarrangement, which allows us to define $\hat{\kappa}(\pi)$
as
\[
\hat{\kappa}(\pi)\coloneqq\begin{cases}
U\kappa(\tau)D, & \text{if }\rho\text{ is empty,}\\
\hat{\kappa}(\std(\tau))U\kappa(\rho)D, & \text{otherwise, if }\rho\text{ is nonempty.}
\end{cases}
\]

\begin{prop}
For all $n\geq0$, the map $\hat{\kappa}$ is a bijection from $\mathfrak{D}_{n}(132)$
to $\mathcal{P}_{n}^{(h)}$.
\end{prop}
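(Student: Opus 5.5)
The plan is to check three things: $\hat{\kappa}$ is well defined on $\mathfrak{D}_{n}(132)$, its image lies in $\mathcal{P}_{n}^{(h)}$, and it is injective. Surjectivity then follows by counting. Theorem~\ref{t-derdes} gives $|\mathfrak{D}_{n}(132)|=|\mathcal{D}_{n}(132)|$. Krattenthaler's bijection $\kappa$ gives $|\mathcal{D}_{n}(132)|=|\mathcal{P}_{n}^{(t)}|$, and the Elizalde--Deutsch bijection $\chi$ gives $|\mathcal{P}_{n}^{(t)}|=|\mathcal{P}_{n}^{(h)}|$. So an injection $\mathfrak{D}_{n}(132)\to\mathcal{P}_{n}^{(h)}$ is a bijection.

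\textbf{Well-definedness.} Write $\pi=\tau n\rho$ with $\tau$ of length $m$, and first suppose $\tau$ is nonempty. Position $m$ is an ascent of $\pi$, since $\pi_{m}<n=\pi_{m+1}$.
\begin{itemize}
\item If $\tau$ has an ascent, the first non-descent of $\pi$ is the first ascent of $\tau$.
\item Otherwise $\tau$ is decreasing, and the first non-descent of $\pi$ is $m$, which is also the first non-descent of $\tau$.
\end{itemize}
Hence $\pi$ is a desarrangement if and only if $\tau$ (equivalently $\std(\tau)$) is one. If $\tau$ is empty, it is a desarrangement by convention. So the recursive call $\hat{\kappa}(\std(\tau))$ is always legitimate, and $\kappa(\tau)$ makes sense since $\tau\in\mathfrak{S}_{n-1}(132)$ when $\rho$ is empty.

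\textbf{No hills.} We argue by induction on $n$; the case $n=0$ is trivial and $n=1$ does not occur, since $1$ is not a desarrangement.
\begin{itemize}
\item If $\rho$ is empty, then $\hat{\kappa}(\pi)=U\kappa(\tau)D$ with $\tau$ nonempty, because $\pi\neq 1$. This is a single primitive component with nonempty interior, so it is not a hill.
\item If $\rho$ is nonempty, then $\hat{\kappa}(\std(\tau))$ has no hills by induction. The last component $U\kappa(\rho)D$ has nonempty interior, so it is not a hill either.
\end{itemize}

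\textbf{Injectivity.} This is the step I expect to be the real obstacle. Given a path in the image, split off its last primitive component $U\alpha D$, leaving a prefix $\beta$.
\begin{itemize}
\item If $\beta$ is nonempty, we must be in the second case with $\tau$ nonempty. Injectivity of $\kappa$ recovers $\rho=\kappa^{-1}(\alpha)$, and induction recovers $\std(\tau)$. Since every letter of $\tau$ exceeds every letter of $\rho$, this determines $\pi$.
\item If $\beta$ is empty, there is an ambiguity. Either $\pi=\sigma n$ (first case, $\rho$ empty) or $\pi=n\sigma$ (second case, $\tau$ empty), where in both $\sigma=\kappa^{-1}(\alpha)\in\mathfrak{S}_{n-1}(132)$ is nonempty.
\end{itemize}

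I would resolve the ambiguity by showing that both $\sigma n$ and $n\sigma$ cannot be desarrangements. By the analysis above, $\sigma n$ is a desarrangement if and only if the first non-descent of $\sigma$ is even. In $n\sigma$, position $1$ is a descent and every later position is shifted by one, so the first non-descent of $n\sigma$ is one more than that of $\sigma$. This holds even when $\sigma$ is decreasing, where the first non-descent of $\sigma$ is its length. Hence $n\sigma$ is a desarrangement if and only if the first non-descent of $\sigma$ is odd. The two conditions are exclusive, so the preimage is unique and the induction closes.
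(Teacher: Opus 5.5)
Your well-definedness, no-hills, and injectivity arguments are correct. Your check that $\tau$ is a desarrangement whenever $\pi$ is fills in a step the paper only asserts.

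Where you depart from the paper is surjectivity. You get it by counting, chaining $|\mathfrak{D}_{n}(132)|=|\mathcal{D}_{n}(132)|$ from Theorem~\ref{t-derdes} with the Elizalde--Pak and Elizalde--Deutsch results. As a standalone proof this is logically valid, since Theorem~\ref{t-derdes} is a published result. But in this paper the proposition exists so that $\hat{\kappa}^{-1}\circ\chi\circ\kappa$ gives a \emph{bijective} proof of $|\mathcal{D}_{n}(132)|=|\mathfrak{D}_{n}(132)|$, the one case of Theorem~\ref{t-derdes} not handled by the bijections for Theorem~\ref{t-main}. Using that very equality to show $\hat{\kappa}$ is onto makes the argument circular for that purpose. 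It also makes the proposition depend on three external enumerative results instead of being self-contained.

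The fix is already in your last paragraph. Your parity computation shows more than exclusivity: since the first non-descent of $\sigma$ is either even or odd, \emph{exactly one} of $\sigma n$ and $n\sigma$ is a desarrangement. With that, surjectivity follows by the same induction, and this is the paper's proof.

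Take $\mu\in\mathcal{P}_{n}^{(h)}$ and write its last return decomposition $\mu=\beta U\alpha D$. Having no hills forces $\alpha$ to be nonempty and $\beta$ to be hill-free.
\begin{itemize}
\item If $\beta$ is nonempty, then $\beta=\hat{\kappa}(\tau')$ by induction. Shift $\tau'$ onto the letters $n-k,\dots,n-1$, where $k$ is the semilength of $\beta$, and follow it by $n$ and then $\kappa^{-1}(\alpha)$. The result is a $132$-avoiding desarrangement, since its first non-descent is that of $\tau'$, and it maps to $\mu$.
\item If $\beta$ is empty, set $\sigma=\kappa^{-1}(\alpha)$ and take whichever of $\sigma n$, $n\sigma$ is the desarrangement.
\end{itemize}
This explicit inverse removes the dependence on Theorem~\ref{t-derdes}. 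Your parity argument also supplies a justification the paper leaves implicit: that $n\kappa^{-1}(\mu_{2})$ is a desarrangement when $\kappa^{-1}(\mu_{2})$ is not.
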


\begin{proof}
We proceed by induction, with the base case ($n=0$) being trivial.
Suppose that $\hat{\kappa}\colon\mathfrak{D}_{k}(132)\rightarrow\mathcal{P}_{k}^{(h)}$
is a bijection for all $k$ up to $n-1$. We first prove that $\hat{\kappa}(\pi)$
has no hills for all $\pi\in\mathfrak{D}_{n}(132)$, and then we prove
that $\hat{\kappa}$ is a bijection from $\mathfrak{D}_{n}(132)$
to $\mathcal{P}_{n}^{(h)}$.

Let $\pi\in\mathfrak{D}_{n}(132)$ and write $\pi=\tau n\rho$. If
$\rho$ is empty, then $\tau$ is nonempty (as there is no desarrangement
of length $1$), so $\kappa(\tau)$ is nonempty and thus $\hat{\kappa}(\pi)=U\kappa(\tau)D$
has no hills. Now, suppose that $\rho$ is nonempty. Then $U\kappa(\rho)D$
has no hills. Moreover, the induction hypothesis tells us that $\hat{\kappa}(\std(\tau))$
has no hills either, so $\hat{\kappa}(\pi)=\hat{\kappa}(\std(\tau))U\kappa(\rho)D$
has no hills.

It remains to prove that $\hat{\kappa}$ is a bijection. Let $\mu\in\mathcal{P}_{n}^{(h)}$.
We can uniquely decompose $\mu$ as the concatenation $\mu=\mu_{1}U\mu_{2}D$,
where $\mu_{1}$ and $\mu_{2}$ are Dyck paths of smaller semilength;
this is the \textit{last return decomposition} of Dyck paths. Suppose
that $\mu_{1}$ is nonempty. Observe that $\mu_{1}$ does not have
any hills, so by the induction hypothesis, $\hat{\kappa}^{-1}(\mu_{1})\in\mathfrak{D}_{k}(132)$
where $k$ is the semilength of $\mu_{1}$. Then, $\hat{\kappa}^{-1}(\mu)$
is the concatenation
\[
\hat{\kappa}^{-1}(\mu)=\overrightarrow{\hat{\kappa}^{-1}(\mu_{1})}n\kappa^{-1}(\mu_{2});
\]
here, $\overrightarrow{\hat{\kappa}^{-1}(\mu_{1})}$ is the unique
permutation on the letters $n-k,n-k+1,\dots,n-1$ whose standardization
is $\hat{\kappa}^{-1}(\mu_{1})$. Now, suppose that $\mu_{1}$ is
empty. If $\kappa^{-1}(\mu_{2})$ is a desarrangement, then 
\[
\hat{\kappa}^{-1}(\mu)=\kappa^{-1}(\mu_{2})n.
\]
Otherwise, if $\kappa^{-1}(\mu_{2})$ is not a desarrangement, then
\[
\hat{\kappa}^{-1}(\mu)=n\kappa^{-1}(\mu_{2}).
\]
Hence, $\hat{\kappa}$ is a bijection from $\mathfrak{D}_{n}(132)$
to $\mathcal{P}_{n}^{(h)}$.
\end{proof}
Because $\hat{\kappa}\colon\mathfrak{D}_{n}(132)\rightarrow\mathcal{P}_{n}^{(h)}$
is a bijection, the composition 
\[
\hat{\kappa}^{-1}\circ\chi\circ\kappa
\]
is a bijection from $\mathcal{D}_{n}(132)$ to $\mathfrak{D}_{n}(132)$.
Together with the bijections we used to prove Theorem \ref{t-main},
this yields a bijective proof of Theorem \ref{t-derdes}.
\begin{example}
Let $\pi=756348921\in\mathfrak{D}_{9}(132)$. Then 
\begin{align*}
\hat{\kappa}(\pi) & =\hat{\kappa}(534126)U\kappa(21)D\\
 & =U\kappa(53412)DUUD\kappa(1)D\\
 & =UUD\kappa(3412)DUUDUDD\\
 & =UUDU\kappa(1)D\kappa(12)DUUDUDD\\
 & =UUDUUDDU\kappa(1)DDUUDUDD\\
 & =UUDUUDDUUDDDUUDUDD.
\end{align*}
This path appeared previously in Example \ref{eg-ElizaldePak}. Combining
the above with Examples~\ref{eg-krattenthaler}\textendash \ref{eg-ElizaldePak},
we obtain
\[
(\hat{\kappa}^{-1}\circ\chi\circ\kappa)(987645213)=756348921;
\]
see Figure \ref{f-132dertodes}.
\begin{figure}
\noindent\fbox{\begin{minipage}[t]{1\columnwidth - 2\fboxsep - 2\fboxrule}%
\vspace{5bp}
\begin{center}
\begin{tikzpicture}[scale=0.75]

\node at (9,4.25) {$987645213$};

\draw[|->, thick] (9,3.5) -- (9,2.5);
\node at (9.5,3) {$\kappa$};

\drawlinedots{0,1,2,3,4,5,6,7,8,9,10,11,12,13,14,15,16,17,18}{0,1,0,1,0,1,0,1,0,1,2,1,0,1,2,1,2,1,0};

\draw[|->, thick] (9,-0.5) -- (9,-1.5);
\node at (9.5,-1) {$\chi$};

\drawlinedots{0,1,2,3,4,5,6,7,8,9,10,11,12,13,14,15,16,17,18}{-5.25,-4.25,-3.25,-4.25,-3.25,-2.25,-3.25,-4.25,-3.25,-2.25,-3.25,-4.25,-5.25,-4.25,-3.25,-4.25,-3.25,-4.25,-5.25};

\draw[|->, thick] (9,-5.75) -- (9,-6.75);
\node at (9.75,-6.25) {$\hat{\kappa}^{-1}$};

\node at (9,-7.5) {$756348921$};

\end{tikzpicture}
\end{center}
\vspace{-10bp}%
\end{minipage}}\caption{\label{f-132dertodes} An example illustrating the bijection $\hat{\kappa}^{-1}\circ\chi\circ\kappa\colon\mathcal{D}_{n}(132)\rightarrow\mathfrak{D}_{n}(132)$.}
\end{figure}
\end{example}

\section{\label{s-open}Open problems}

To conclude this paper, we present two open problems for future investigation.

Theorem \ref{t-derdes} concerns sets $\Pi$ of length 3 patterns
such that $\left|\mathcal{D}_{n}(\Pi)\right|=\left|\mathfrak{D}_{n}(\Pi)\right|$
for all $n\geq0$, but more generally, one can ask for conditions
under which this property holds for sets $\Pi$ containing longer
patterns.
\begin{problem}
\label{pb-trokya}Let $\Pi$ be an arbitrary set of patterns. Find
necessary conditions and sufficient conditions on $\Pi$ for $\left|\mathcal{D}_{n}(\Pi)\right|=\left|\mathfrak{D}_{n}(\Pi)\right|$
for all $n\geq0$.
\end{problem}

One can also pose the analogous problem for $\fix$ and $\pix$ being
equidistributed.
\begin{problem}
Let $\Pi$ be an arbitrary set of patterns. Find necessary conditions
and sufficient conditions on $\Pi$ for the statistics $\fix$ and
$\pix$ being equidistributed over $\mathfrak{S}_{n}(\Pi)$ for all
$n\geq0$.
\end{problem}

\vspace{10bp}

\noindent \textbf{Acknowledgments.} We thank Justin Troyka for suggesting
Problem \ref{pb-trokya} at the 24th International Conference on Permutation
Patterns. The authors were partially supported by NSF grant DMS-2316181.

\bibliographystyle{plain}
\addcontentsline{toc}{section}{\refname}\bibliography{bibliography}

\end{document}